\newcommand\RMA[1]{\textcolor{red}{#1}}
\DeclareMathOperator*{\esssup}{ess\,sup}
\DeclareMathOperator*{\argmax}{arg\,max}
\def\R{\mathbb{R}}
\def\bl{\textcolor{blue}}
\newtheorem{theorem}{Theorem}[section]
\newtheorem{lemma}[theorem]{Lemma}
\newtheorem{corollary}[theorem]{Corollary}
\newtheorem{proposition}[theorem]{Proposition}
\theoremstyle{definition}
\newtheorem{definition}{Definition}
\newtheorem{assumption}{Assumption}
\theoremstyle{remark}
\newtheorem{remark}[theorem]{Remark}
\begin{document}
	\title{\textbf{Nonzero-sum optimal stopping games and generalised Nash equilibrium}}
	
	\author[1]{Randall Martyr\thanks{Corresponding author. Email: r.martyr@qmul.ac.uk}\thanks{Financial support received from the EPSRC via grant EP/N013492/1.}}
	\author[1]{John Moriarty\thanks{Financial support received from the EPSRC via grant EP/K00557X/2.}}
	\affil[1]{School of Mathematical Sciences, Queen Mary University of London, Mile End Road, London E1 4NS, United Kingdom.}
	\maketitle
	\begin{abstract}
		In the nonzero-sum setting, we establish a connection between Nash equilibria in games of optimal stopping (Dynkin games) and generalised Nash equilibrium problems (GNEP). In the Dynkin game this reveals novel equilibria of threshold type and of more complex types, and leads to novel uniqueness and stability results.
	\end{abstract}


\section{Introduction}

In this paper we establish a connection between Nash equilibria in two different types of game. The first type is the two-player, nonzero-sum Dynkin game of optimal stopping (for general background on optimal stopping problems the reader is referred to \cite{Peskir2006}).
Player $i \in \{1,2\}$ chooses a stopping time $\tau_i$ for a 
strong Markov process 
$X=(X_t)_{t \geq 0}$ defined on the interval $(x_\ell, x_r)$. {\em Reward functions} $f_i$, $g_i$, $h_i$ are given and the reward or {\em payoff} to player $i$ is
\begin{equation}\label{eq:Game-Payoff-Functional}
\begin{split}
\mathcal{J}_{i}(\tau_{1},\tau_{2}) \coloneqq f_{i}(X_{\tau_{i}})\mathds{1}_{\{\tau_{i} < \tau_{-i}\}} + g_{i}(X_{\tau_{-i}})\mathds{1}_{\{\tau_{-i} < \tau_{i}\}} + h_{i}(X_{\tau_{i}})\mathds{1}_{\{\tau_{i} = \tau_{-i}\}},
\end{split}
\end{equation}
where for each player $i \in \{1,2\}$ the subscript $-i$ denotes the other player. In this context equilibrium strategies $(\tau_1, \tau_2)$ of the form
\begin{equation}\label{eq:thresholdform}
\tau_1 = \inf\{t \geq 0: X_t \leq \ell\} \quad \text{ and } \quad \tau_2 = \inf\{t \geq 0: X_t \geq r\},
\end{equation}
for constants $\ell, r \in (x_\ell,x_r)$ with $\ell < r$, 
are referred to as {\em threshold-type} equilibria. A recent example is in  \cite{DeAngelis2015}, in which the thresholds $\ell$, $r$ are drawn from the disjoint {\em strategy spaces} $\mathcal S_1$ and $\mathcal S_2$ respectively where
\begin{equation}\label{eq:feassets}
\mathcal S_1:=[x_\ell,a], \qquad \mathcal S_2:=[b,x_r],
\end{equation} 
for some constants $a,b$ with $x_\ell<a<b<x_r$. 

The second type of game is a deterministic {\em generalised game} \cite{Facchinei2007} (or \emph{abstract economy} \cite{Arrow1954}) with $n \geq 2$ players, where $n$ will depend on the structure of the equilibrium studied in the Dynkin game. Since the examination of all cases $n \geq 2$ is reserved for future work, however, we focus on $n=2$ and simply provide an example with $n=3$. 


The connection yields novel equilibria in the Dynkin game. This novelty is threefold. Firstly the reward functions are not required to be differentiable. Secondly we obtain novel equilibria of threshold type, since both cases $a<b$ and $a \geq b$ are permitted. Thirdly, while threshold-type equilibria correspond to the case $n=2$, the cases $n>2$ yield equilibria with more complex structures. To the best of our knowledge, these complex equilibrium structures in the Dynkin game have not been previously studied.

In the threshold-type case, we obtain the uniqueness of the equilibria among Markovian strategies, and results about their local and global stability.

\subsection{Background}

The structure of Nash equilibria in nonzero-sum Dynkin games has recently been investigated in \cite{Attard2015} and \cite{DeAngelis2015}, where sufficient conditions for the existence and uniqueness of threshold-type equilibria are obtained. A key difference between the case $n=2$ of the present paper and the latter work is that there, the functions $f_i$ in \eqref{eq:Game-Payoff-Functional} are twice differentiable and have unique points of inflexion $a$ and $b$ respectively with $a < b$, conditions which may all be relaxed in the present approach. 
Appendix \ref{rem:ext} contains remarks on the inclusion of time discounting, and the use of other Markov processes $X$, in our setup.

Our results on stability relate to an iterative approximation scheme for Nash equilibria, 
which has been previously studied outside the Markovian framework in \cite{Hamadene2010} and, in the Markovian framework, in \cite{BensoussanFriedman1977}, \cite{Cattiaux1990}, \cite{KARATZASSudderth2006} and \cite{Nagai1987}. In \cite{KARATZASSudderth2006} it is assumed that $f_{i} = g_{i}$ and in \cite{BensoussanFriedman1977}, \cite{Cattiaux1990} and \cite{Nagai1987} a condition related to 
superharmonicity is imposed for the $g_{i}$.
The latter conditions ensure monotone convergence over the iteration, whereas the approach via stability in Section \ref{Section:Stability-Uniqueness} does not rely on monotonicity.

The special case of zero-sum Dynkin games, in which $f_i=-g_{-i}$ and $h_i=-h_{-i}$, has received particular attention in the literature. Thorough analyses of the zero-sum game for a large class of driving Markov processes can be found in \cite{Ekstrom2008} and \cite{Peskir2009} and in that context Assumption \ref{assumption:Preliminary-Assumption-1}, which relates the game to a {\em war of attrition} \cite[Section 4.5.2]{Fudenberg1991}, is sufficient for the existence of a Nash equilibrium among pure strategies. We adopt the same setting, as is also common in the nonzero-sum context (see for example \cite{BensoussanFriedman1977,Cattiaux1990,Nagai1987}).
\begin{assumption}\label{assumption:Preliminary-Assumption-1}
	For $i = 1,2$ the functions $f_{i}$, $g_{i}$ and $h_{i}$ are bounded and continuous on $[x_\ell, x_r]$, and satisfy $f_{i} \le h_{i} \le g_{i}$.
\end{assumption}
In Remark~\ref{Remark:Waiting-Incentive} we discuss how Assumption~\ref{assumption:Preliminary-Assumption-1} can be weakened without affecting the main results.



\subsection{Preliminaries}\label{sec:setting}

In this section we recall necessary background on subprocesses, superharmonic 
and quasi-concave functions, 
which should be familiar.

\subsubsection{Subprocesses of a Brownian motion}\label{sec:subpr}

Let $W = (W_{t})_{t \ge 0}$ be a one-dimensional standard Brownian motion defined on a filtered probability space $(\Omega,\mathcal{F},\mathbb{F}=(\mathcal{F}_{t})_{t \geq 0},\hat{\mathds{P}})$, where $\mathbb{F}$ is the universally completed filtration \cite[p.~27]{Blumenthal1968}. We will write the probability measure as $\hat{\mathds{P}}^{x}$ in the case $\hat{\mathds{P}}(\{W_0=x\}) = 1$, and denote the expectation operator with respect to $\hat{\mathds{P}}^{x}$ by $\hat{\mathds{E}}^{x}$.
From $W$ we derive {\em subprocesses} in the sense of \cite[Chapter~\MakeUppercase{\romannumeral 3}]{Blumenthal1968}. More precisely, for each subset $E$ of $[0,1]$ we define a subprocess $X^E = X = (X_{t})_{t \ge 0}$ such that $X$ and its (almost surely finite) lifetime $\zeta$ satisfy,
\begin{eqnarray}
\zeta &=& \inf\{t \ge 0 \colon W_{t} \notin E\}, \\
X_t &=&
\begin{cases}
W_t, & 0 \leq t < \zeta, \\
\Delta, & t \geq \zeta.
\end{cases}
\end{eqnarray}
Here $\Delta$ is a cemetery state and the state space of $X$ is $E$ equipped with its Borel sigma-algebra $\mathcal{B}(E)$ and augmented by $\Delta$. We set $\phi(\Delta) = 0$ for every measurable function $\phi$ on $E_{\Delta} \coloneqq E \cup \Delta$, showing in Section \ref{sec:pre} below that this choice involves no loss of generality. For $x \in E$ let $\mathds{P}^{x}$ and $\mathds{E}^{x}$ denote the probability measure and expectation operator corresponding to $X$ when $\mathds{P}^{x}(\{X_{0} = x\}) = 1$. For every measurable function $\phi$ vanishing outside $E$ and every $t \ge 0$ we have \cite[p.~105]{Blumenthal1968}:
\begin{equation}\label{eq:Vanishing-After-Death}
\mathds{E}^{x}[\phi(X_{t})] = \hat{\mathds{E}}^{x}[\phi(W_{t})\mathds{1}_{\{t < \zeta\}}].
\end{equation}
For each measurable set $A$ 
we write the associated first entrance 
 time of $X$ as 
\begin{equation}\label{eq:debut}
D_{A} \coloneqq \inf\{t \ge 0 \colon X_{t} \in A\} = \inf\{t > 0 \colon X_{t} \in A\} \qquad \text{ a.s., }
\end{equation}
where we take $\inf \emptyset = \zeta$ (the second equality follows since every point is regular for Brownian motion, see for example \cite[Remark 8.2]{morters2010brownian}).

\subsubsection{Superharmonic functions}
\label{sec:fnclasses}

We will use the fact that value functions of various optimal stopping problems for these subprocesses are superharmonic (see e.g. \cite{Dayanik2003} and Proposition \ref{Prop:Superaveraging-Property}). Let $E \subseteq [0,1]$, $A \in \mathcal{B}(E_{\Delta})$, and write $\mathcal{T}$ for the set of all $\mathbb{F}$-stopping times with values in $\R_+ \cup \{\infty\}$.


\begin{definition}\label{Definition:Superharmonic-Function}
	A measurable function $\phi \colon E_\Delta \to \mathbb{R}$ is said to be {\em 
	superharmonic} (resp. 
	harmonic) on $A$
	if for every $x \in E$ and $\tau \in \mathcal{T}$:
	\[
	\phi(x) \ge\text{(resp. $ = $)}\,\, \mathds{E}^{x}[
	\phi(X_{\tau \wedge D_{A^{c}}})].
	\]
A measurable function $\phi \colon E_\Delta \to \mathbb{R}$ is said to be {\em 
subharmonic} on $A$ if $-\phi$ is 
superharmonic on $A$, and {\em 
harmonic} on $A$ if it is both 
superharmonic and 
subharmonic on $A$.
If $A = E$ then the term superharmonic, subharmonic, or harmonic is used as appropriate.
\end{definition}

Using 
the strong Markov property, 
one can show that (see \cite[p.~561]{Peskir2009} for details): if $\phi$ is 
superharmonic then
\begin{equation}\label{Definition:Supermedian-Function}
\phi(X_{\nu}) \ge \mathds{E}^{x}[
\phi(X_{\rho}) \vert \mathcal{F}_{\nu}], \enskip \text{a.s.}\,\,\forall\, \rho,\nu \in \mathcal{T} \,\,\text{such that}\,\, \nu \le \rho.
\end{equation}
In other words, $\phi$ is a 
superharmonic function if and only if $(
\phi(X_{t}))_{t \ge 0}$ is a strong supermartingale with respect to $\mathbb{F}$. Here the qualifier `strong' refers to the extension of the supermartingale property to stopping times. In our setup the 
superharmonic functions on $E$
are also equivalent to
the {\em strongly supermedian} functions on $E$ (see for example \cite{Cattiaux1990}, \cite{Karoui1992} and \cite{Mertens1973}), as follows.
%
Taking $A = E$ and $\tau = \zeta$ in Definition \ref{Definition:Superharmonic-Function}, the convention $\phi(\Delta) = 0$ implies that the 
superharmonic functions $\phi$ on $E$
are non-negative and are therefore strongly supermedian. Moreover, since $X$ is a subprocess of Brownian motion, superharmonic (respectively subharmonic and harmonic) functions are concave (resp. convex, linear) on convex subsets of $E$ (see 
\cite[p.~179]{Dayanik2003}). 




We will make repeated use of the following transformation.

\begin{definition}\label{def:red}
Given $A \in \mathcal{B}(E_{\Delta})$ 
and a bounded measurable function $\phi \colon E_\Delta \to \mathbb{R}$, and recalling the first entrance time defined in \eqref{eq:debut}, define $\phi_{A} \colon E_\Delta \to \mathbb{R}$ by
	\begin{equation}\label{eq:Lambda-Reduced-Function}
	\phi_{A}(x) \coloneqq \mathds{E}^{x}\left[
	\phi(X_{D_{A}})\right].
	\end{equation}
\end{definition}
It is not difficult to show (using the strong Markov property) that for any measurable function $\phi$, the function
$\phi_{A}$ is 
harmonic on $A^{c}$, and is 
superharmonic if $\phi$ is 
superharmonic.
Moreover, it is continuous whenever $\phi$ is continuous and $A$ is closed in $E$ \cite{Schilling2012}.

\subsubsection{Quasi-concavity}

For use in the existence results below, we recall the definition and some properties of quasi-concave functions (see e.g. \cite[Chapter 3.4]{Boyd2004}). The extended real line will be denoted by $\bar{\mathbb{R}} = [-\infty,+\infty]$. 

\begin{definition}\label{Definition:Quasi-Concave-Function}
	Let $\mathcal{D} \subseteq \mathbb{R}$ be convex. A function $F \colon \mathcal{D} \to \bar{\mathbb{R}}$ is said to be {\em quasi-concave} if for every $\alpha \in \mathbb{R}$ the superlevel sets $L^{+}_{\alpha}$ defined by
	\[
	L^{+}_{\alpha} = \left\{x \in \mathcal{D} \colon F(x) \ge \alpha \right\}
	\]
	are convex. If the same statement holds but with the sets $\left\{x \in \mathcal{D} \colon F(x) > \alpha \right\}$ then $F$ is said to be {\em strictly} quasi-concave. A function $F$ is said to be (strictly) quasi-convex on a convex domain $\mathcal{D}$ if and only if $-F$ is (strictly) quasi-concave.
\end{definition}
All concave functions are quasi-concave. Moreover a function $F \colon \mathcal{D} \to \bar{\mathbb{R}}$ is quasi-concave if and only if $\mathcal{D}$ is convex and for any $x_{1},x_{2} \in \mathcal{D}$ and $0 \le \theta \le 1$ we have
\begin{equation}\label{eq:Quasi-Concavity-Alt}
F(\theta x_{1} + (1-\theta)x_{2}) \ge \min(F(x_{1}),F(x_{2})).
\end{equation}
If \eqref{eq:Quasi-Concavity-Alt} holds with strict inequality then $F$ is strictly quasi-concave.

\medskip
The remainder of this paper is organised as follows. In Section \ref{sec:games} the two game settings are presented and connected. Useful alternative expressions for the expected payoffs in the Dynkin game, as optimal stopping problems for subprocesses, are developed in Section \ref{sec:fktsp},
and our main 
existence and uniqueness
results follow in Sections \ref{Section:Existence-Of-Equilibria} and \ref{Section:Stability-Uniqueness}. Finally, in Section~\ref{sec:examples} we present an extension for a more complex equilibrium structure.

\section{Two games}\label{sec:games}
\subsection{Generalised Nash equilibrium}\label{sec:gnep}

In the $n$-player generalised game each player's set of available strategies, or {\em feasible strategy space}, depends on the strategies chosen by the other $n-1$ players. The case $n=2$ is as follows. 
Player $i \in \{1,2\}$ has a {\em strategy space} $\mathcal{S}_{i}$ and a set-valued map $K_{i} \colon \mathcal{S}_{-i} \rightrightarrows \mathcal{S}_{i}$ determining their {\em feasible} strategy space. Denoting a generic strategy for player $i$ by $s_i$, a strategy pair $(s_1,s_2)$ is then {\em feasible} if $s_i \in K_i(s_{-i})$ for $i=1,2$. Setting $\mathcal{S}_{1} = [0,a]$ and $\mathcal{S}_{2} = [b,1]$, the pair of mappings $K_{1} \colon [b,1] \rightrightarrows [0,a]$ and $K_{2} \colon [0,a] \rightrightarrows [b,1]$ will be given by
\begin{equation}
\label{eq:01}
\begin{split}
K_{1}(y) & = [0,y \wedge a], 
\\
K_{2}(x) & = [x \vee b,1], 
\end{split}
\end{equation}
where $a$ and $b$ are given constants lying in the interval $(0,1)$.
That is, the feasible strategy pairs are given by the convex, compact set
\begin{equation}\label{eq:constraintset}
\mathcal{C} = \{(x,y) \in [0,a] \times [b,1] \colon x \le y\}.
\end{equation}
This choice of $\mathcal C$ will be appropriate for equilibria of the threshold form \eqref{eq:thresholdform} in the Dynkin game. (The set $\mathcal C$ will be modified in Section \ref{sec:examples} below, where an example of a more complex equilibrium is studied).
Writing $U_i: \mathcal{C} \to \bar{\mathbb{R}}$ for the utility function of player $i$, the generalised Nash equilibrium problem is then given by:
\begin{definition}[GNEP, $n=2$]\label{def:gnep}
Find $s^{*} = (s_{1}^{*},s_{2}^{*}) \in \mathcal{C}$ which is a Nash equilibrium, that is:
\begin{equation}\label{eq:Auxiliary-GNEP}
\begin{cases}
U_1(s^{*}) = \sup\limits_{(s_1,s_2^*) \in \mathcal C} U_1(s_1,s_2^*), \\
U_2(s^{*}) = \sup\limits_{(s_1^*,s_2) \in \mathcal C} U_2(s_1^*,s_2).
\end{cases}
\end{equation}
\end{definition}
In the proofs below it will be convenient to write 
$\mathcal{S} := \mathcal{S}_{1} \times \mathcal{S}_{2}$. We will also make use of the following definition:

\begin{definition}
Let $s=(s_1,s_2, \ldots, s_n) \in \mathbb{R}^n$ and $w \in \mathbb{R}$. Then for each $i \in \{1,\ldots,n\}$ we will write $(w,s_{-i})$ for the vector $s$ modified by replacing its $i$th entry with $w$.
\end{definition}

\subsection{Optimal stopping}\label{sec:osg}
We also consider a {\em Dynkin game} in which two players observe the Brownian motion subprocess $X$ of Section \ref{sec:subpr}. Each player can stop the game and receive a reward (which may be positive or negative) depending on the process value and on who stopped the game first.
More precisely we consider only pure strategies: that is, each player $i \in \{1,2\}$ chooses a stopping time $\tau_i$ lying in $\mathcal{T}$ as their strategy.
Let $f_{i}$, $g_{i}$ and $h_{i}$ be real-valued {\em reward functions} on $E$ which respectively determine the reward to player $i$ from stopping first, second, or at the same time as the other player. For convenience we will refer to the $f_i$ as the {\em leader} reward functions and to the $g_i$ as the {\em follower} reward functions.
Given a pair of strategies $(\tau_{1},\tau_{2})$ and recalling the payoff defined in 
\eqref{eq:Game-Payoff-Functional}, 
we denote the {\em expected payoff} to player $i$ by
\begin{equation}\label{eq:Expected-Game-Payoff-Functional}
M^{x}_{i}(\tau_{1},\tau_{2}) = \mathds{E}^{x}\left[\mathcal{J}_{i}(\tau_{1},\tau_{2})\right].
\end{equation} 
The problem of finding a Nash equilibrium for this Dynkin game is then:
\begin{definition}[DP]
	Find a pair $(\tau^{*}_{1},\tau^{*}_{2}) \in \mathcal{T} \times \mathcal{T}$ such that for every $x \in E$ we have:
	\begin{equation}\label{eq:NEP-Pure-Strategies}
	\begin{cases}
	M^{x}_{1}(\tau^{*}_{1},\tau^{*}_{2}) = \sup\limits_{\tau_{1} \in \mathcal{T}}M^{x}_{1}(\tau_{1},\tau^{*}_{2}) \\
	M^{x}_{2}(\tau^{*}_{1},\tau^{*}_{2}) = \sup\limits_{\tau_{2} \in \mathcal{T}}M^{x}_{2}(\tau^{*}_{1},\tau_{2}).
	\end{cases}
	\end{equation}
	If $\tau^{*}_{1} = D_{S_{1}}$ and $\tau^{*}_{2} = D_{S_{2}}$ with $S_{1}, S_{2} \in \mathcal{B}(E_{\Delta})$, then the Nash equilibrium $(D_{S_{1}},D_{S_{2}})$ is said to be {\em Markovian}.
\end{definition}

\subsection{Linking the games}
We now present the link between the games in the case $n=2$ and $E = (0,1)$, which is the setting used in the rest of the paper (with the exception of Section \ref{sec:examples}, where $n=3$). The idea is that threshold-type solutions to the DP can be characterised by the slopes $U_1(x,y)$ and $U_2(x,y)$ of certain secant lines. This gives nothing else than a deterministic game, which may be studied in the above generalised setting in order to discover additional novel equilibria. We will close this section by illustrating that this link between the DP and GNEP does not preserve the zero-sum property.
\subsubsection{Construction of utility functions for the GNEP}
For $(x,y) \in [0,1]^2$
 we define
\begin{equation}\label{eq:Auxiliary-GNZSG-Utilities}
\begin{split}
U_{1}(x,y) & = \begin{cases}
\frac{f_{1}(x) - g_{1,[y,1]}(x)}{y - x},& x < y,\\
-\infty,& \text{otherwise},
\end{cases}\\
U_{2}(x,y) & = \begin{cases}
\frac{f_{2}(y) - g_{2,[0,x]}(y)}{y - x},& x < y,\\
-\infty,& \text{otherwise},
\end{cases}
\end{split}
\end{equation}
where for $A \in \mathcal B(E_{\Delta})$, the function $g_{i,A}$ is obtained by taking $\phi=g_i$ in Definition~\ref{def:red}. To ensure that these utility functions are continuous and bounded above on $\mathcal C$ we strengthen Assumption \ref{assumption:Preliminary-Assumption-1} to:
\smallskip

{\bf Assumption 1'}\label{Assumption:Infimum-Approach}
	If $b \le a$ then $g_{i} > f_{i}$ on $[b,a]$ for $i = 1,2$.

\smallskip	

\noindent We now record comments on this choice of utility functions in the GNEP:

\indent (i) The rationale for the form \eqref{eq:Auxiliary-GNZSG-Utilities} of $U_1$ and $U_2$ is as follows. Lemma \ref{Lemma:Characterisation-1} below will confirm that in equilibrium, player 1's strategy is characterised by an optimal stopping problem with obstacle $f_{1} - g_{1,[r,1]}$, whose geometry determines the solution (in the sense of \cite{Dayanik2003}, for example). In particular we show in Theorem \ref{Theorem:Necessary-and-Sufficiency-Optimality} that, for threshold strategies, the function $U_1$ characterises the solution. Similar comments of course apply to player 2.

\indent (ii) The GNEP characterisation does not assume smoothness but, if the reward functions are differentiable, then the double smooth fit condition (that is, the differentiability of the players' equilibrium payoffs across the thresholds $\ell$ and $r$ respectively) follows as a corollary.

\indent (iii) Later, in Section \ref{sec:examples}, we show how additional functions $U_i$ may be added to characterise more complex equilibria than the threshold type, leading to GNEPs with more than two players.

\subsubsection{Remark on the zero-sum property}
It is interesting to note that the zero-sum property in the DP does not imply the same for the GNEP and vice versa. Suppose that the GNEP \eqref{eq:Auxiliary-GNZSG-Utilities} has zero sum: that is, 
\begin{equation}\label{eq:zsgnep}
\sum_{i=1}^{2}U_{i}(x,y) = 0,\quad\forall \, (x,y) \in \mathcal{S}.
\end{equation}
By Definition~\ref{def:red} the functions $g_{1,[y,1]}$ and $g_{2,[0,x]}$ are given by:
\begin{align}\label{eq:gexp1}
g_{1,[y,1]}(x) & = \begin{cases}
g_{1}(y)\cdot\frac{x}{y},& \forall x \in [0,y) \\
g_{1}(x),& \forall x \in [y,1],
\end{cases} \\ \label{eq:gexp2}
g_{2,[0,x]}(y) & = \begin{cases}
g_{2}(y),& \forall y \in [0,x] \\
g_{2}(x)\cdot\frac{1-y}{1-x},& \forall y \in (x,1],
\end{cases}
\end{align}
and we recall that $f_{1}(0) = g_{2}(0) = g_{1}(1) = f_{2}(1) = 0$. Then considering separately the case $x=0$, $y \in [b,1]$ in \eqref{eq:zsgnep} and the case $y=1$, $x \in [0,a]$, we conclude that $f_{1}(x) = f_{2}(y) = 0,\,\forall \, (x,y) \in \mathcal{S}$. Then in the DP, any nonzero choice of the reward functions $g_i$ satisfying Assumption~\ref{assumption:Preliminary-Assumption-1} results in a game with $f_i \neq -g_{-i}$ and hence is nonzero sum. 

On the other hand, suppose that $a < b$ and consider the zero-sum DP with reward functions
	\begin{align*}
	f_{1}(x) & = \begin{cases}
	x(a-x),& x \in [0,a] \\
	(1-x)(a-x),&x \in (a,1],
	\end{cases} \\
	g_{1}(x) & = \begin{cases}
	x(b-x),& x \in [0,b) \\
	(1-x)(b-x),&x \in [b,1],
	\end{cases}\\
f_{2}& = -g_{1}, \quad g_{2} = -f_{1}, \quad h_{1} = -h_{2}.
	\end{align*}
	Then for $(x,y) \in \mathcal{S}$	the sum of the payoffs in the GNEP is
	\[
	\sum_{i=1}^{2}U_{i}(x,y) = x\left(\frac{a-x}{y-x}\right)\left(1+\frac{1-y}{1-x}\right) - \left(\frac{(1-y)(b-y)}{y-x}\right)\left(\frac{y+x}{y}\right),
	\]
which is strictly positive for $(x,y) \in \{0,a\} \times (b,1)$, and so the GNEP is not zero sum.

\section{Optimal stopping of a subprocess}\label{sec:fktsp}

In this section we 
 provide three equivalent expressions for expected payoffs in the Dynkin game, as optimal stopping problems for subprocesses. These will be used repeatedly to establish the existence and uniqueness results of Sections~\ref{Section:Existence-Of-Equilibria} and \ref{Section:Stability-Uniqueness}.

\subsection{Preliminaries}\label{sec:pre}

We begin by confirming that without loss of generality all reward functions may be set equal to zero on $\Delta$. Suppose instead that the reward is to be nonzero on $\Delta$. This could be accommodated by taking the following modified form for the expected payoffs:
\begin{align}
\begin{split}
	M^{x}_{E}(\tau,\sigma) = {} & \hat{\mathds{E}}^{x}\bigl[
	\bigl\{f(W_{\tau})\mathds{1}_{\{\tau < \sigma\}} + g(W_{\sigma})\mathds{1}_{\{\tau > \sigma\}} + h(W_{\sigma})\mathds{1}_{\{\tau = \sigma\}}\bigr\}\mathds{1}_{\{(\tau \wedge \sigma) < D_{E^{c}}\}}\bigr] \\
	& + \hat{\mathds{E}}^{x}\bigl[
	H(W_{D_{E^{c}}})\mathds{1}_{\{(\tau \wedge \sigma) \ge D_{E^{c}}\}}\bigr],
	\end{split}\label{eq:withH}
\end{align}
where $\tau,\sigma$ denote the players' stopping times and $H$ specifies the reward received at the boundaries of $E$. 
This is for example the approach taken in \cite{Attard2015}, where it is assumed that $f(x) = g(x) = H(x),\, x \in E^{c}$. Note also that by construction only the values of $H$ on $E^c$ are relevant. Now taking $\phi=H$ and $A=E^{c}$ in Definition \ref{def:red}
and using the strong Markov property we can show that, 
\begin{align}
	M^{x}_{E}(\tau,\sigma) - H_{E^{c}}(x)  = {} & \hat{\mathds{E}}^{x}\bigl[
	\bigl\{[f - H_{E^{c}}](W_{\tau})\mathds{1}_{\{\tau < \sigma\}} + [g - H_{E^{c}}](W_{\sigma})\mathds{1}_{\{\tau > \sigma\}} \nonumber \\
	& \qquad \qquad + [h - H_{E^{c}}](W_{\sigma})\mathds{1}_{\{\tau = \sigma\}}\bigr\}\mathds{1}_{\{(\tau \wedge \sigma) < D_{E^{c}}\}}\bigr] \nonumber \\
	= {} & \mathds{E}^{x}\bigl[
	\bigl\{[f - H_{E^{c}}](X_{\tau})\mathds{1}_{\{\tau < \sigma\}} + [g - H_{E^{c}}](X_{\sigma})\mathds{1}_{\{\tau > \sigma\}} \nonumber \\
	& \qquad \qquad + [h - H_{E^{c}}](X_{\sigma})\mathds{1}_{\{\tau = \sigma\}}\bigr\}\mathds{1}_{\{(\tau \wedge \sigma) < D_{E^{c}}\}}\bigr], \label{eq:tsfH}
\end{align}
where the second equality comes from \eqref{eq:Vanishing-After-Death} above.
The right-hand side of \eqref{eq:tsfH} is equal to the expected payoff \eqref{eq:withH} when the reward functions $f$, $g$, $h$ and $H$ are taken to be $\tilde{f} = f - H_{E^{c}}$, $\tilde{g} = g - H_{E^{c}}$, $\tilde{h} = h - H_{E^{c}}$ and $\tilde H \equiv 0$ respectively.
It may therefore be assumed without loss of generality in the proofs below that the reward functions are zero on $\Delta$ (and indeed on $E^c$).

We will be interested in optimally stopping the subprocess $X^{A^{c}}$. For this, define the set of stopping times $\mathcal{T}_{0,D_{A}} \coloneqq \{\tau \in \mathcal{T} \colon 0 \le \tau \le D_{A}\}$. The proof of the following useful result can be found in, for example, \cite{Attard2016} and \cite{Ekstrom2008}:

\begin{proposition}\label{Prop:Superaveraging-Property}
	For $A \in \mathcal{B}(E_\Delta)$ and functions $f$, $g$ and $h$ satisfying Assumption~\ref{assumption:Preliminary-Assumption-1}, the map 
	$$x \mapsto \check V(x)\coloneqq \sup_{\tau \in \mathcal{T}}\mathds{E}^{x}\left[
f(X_{\tau})\mathds{1}_{\{\tau < D_A\}} + g(X_{D_A})\mathds{1}_{\{D_A < \tau\}} + h(X_{D_A})\mathds{1}_{\{\tau = D_A\}}
\right],$$
	is measurable and satisfies:
	\begin{equation}\label{eq:Alt-Rep-Super-Averaging}
	\forall  \rho \in \mathcal{T}_{0,D_{A}} \enskip \colon \enskip  \mathds{E}^{x}[\check V(X_{\rho})] \le \check V(x) \quad \forall x \in E.
	\end{equation}
	In other words, $x \mapsto \check V(x)$ is superharmonic on $A^{c}$.
\end{proposition}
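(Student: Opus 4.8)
The plan is to recognise $\check V$ as the value function of a single‑player optimal stopping problem for $X$ with the bounded, adapted reward process
$$G_t \coloneqq f(X_t)\mathds{1}_{\{t < D_A\}} + g(X_{D_A})\mathds{1}_{\{D_A < t\}} + h(X_{D_A})\mathds{1}_{\{t = D_A\}},$$
so that $\check V(x) = \sup_{\tau \in \mathcal T}\mathds{E}^x[G_\tau]$, and then to read off the superaveraging inequality from the strong Markov property by pasting an $\varepsilon$-optimal continuation onto the stopping time $\rho$.

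First I would settle measurability. Since $f$, $g$, $h$ are bounded and continuous and the paths of $X$ are right‑continuous, $G$ is a bounded progressively measurable process, and $x \mapsto \check V(x)$, being a supremum over $\mathcal T$ of the expectations $\mathds{E}^x[G_\tau]$, is measurable; here I would lean on the universal completeness of $\mathbb F$ recalled in Section~\ref{sec:subpr}, which is exactly what renders the analytic‑set supremum defining $\check V$ universally measurable. This is the point at which I would simply defer to the standard references.

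For the superaveraging inequality, fix $x \in E$ and $\rho \in \mathcal T_{0,D_A}$, so that $0 \le \rho \le D_A$ a.s. The heart of the argument is a pasting identity: for a continuation time $\sigma$ (a stopping time of the shifted filtration) set $\hat\tau = \rho + \sigma\circ\theta_\rho$. On $\{\rho < D_A\}$ the point $X_\rho$ lies outside $A$, the strong Markov property gives the additive decomposition $D_A = \rho + D_A\circ\theta_\rho$, and checking termwise shows $G_{\hat\tau} = G_\sigma\circ\theta_\rho$; on $\{\rho = D_A\}$ we have $X_\rho \in A$, so $D_A\circ\theta_\rho = 0$, any $\sigma > 0$ collects $g(X_{D_A})$, and since every point of $A$ has $\check V = g$ there (because $h \le g$) the same identity persists. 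Consequently $\mathds{E}^x[G_{\hat\tau}\mid\mathcal F_\rho] = \mathds{E}^{X_\rho}[G_\sigma]$ by the strong Markov property. Choosing $\sigma$ to be $\varepsilon$-optimal for the problem restarted at $X_\rho$ then yields $\mathds{E}^x[G_{\hat\tau}\mid\mathcal F_\rho] \ge \check V(X_\rho) - \varepsilon$, and taking expectations gives $\check V(x) \ge \mathds{E}^x[G_{\hat\tau}] \ge \mathds{E}^x[\check V(X_\rho)] - \varepsilon$; letting $\varepsilon \downarrow 0$ completes the proof.

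The main obstacle is to make rigorous the choice of a continuation $\sigma$ that is $\varepsilon$-optimal for the problem restarted at the \emph{random} point $X_\rho$: the time $\sigma$ must depend measurably on $X_\rho$ while remaining a genuine stopping time of the shifted filtration. I would resolve this by invoking a measurable selection theorem to produce a jointly measurable family $y \mapsto \sigma^y$ of $\varepsilon$-optimal stopping times, so that $\hat\tau = \rho + \sigma^{X_\rho}\circ\theta_\rho$ is a bona fide element of $\mathcal T$ and the conditional bound $\mathds{E}^x[G_{\hat\tau}\mid\mathcal F_\rho] \ge \check V(X_\rho) - \varepsilon$ holds with $\check V$ the genuine pointwise supremum; once again the universal completeness of $\mathbb F$ is what guarantees the measurability needed for this selection. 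The remaining steps — the termwise reward bookkeeping and the decomposition $D_A = \rho + D_A\circ\theta_\rho$ under the shift — are routine consequences of the strong Markov property and the continuity of the reward functions.
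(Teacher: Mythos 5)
Your skeleton --- restarting the problem at $\rho$ via the strong Markov property and pasting a near-optimal continuation --- is the natural idea, and your termwise bookkeeping (the identity $D_A = \rho + D_A\circ\theta_\rho$ on $\{\rho \le D_A\}$, and the boundary case $\rho = D_A$) is correct. The genuine gap is exactly the step you flag as the main obstacle and then dispose of by citation: producing a family $y\mapsto\sigma^y$ of $\varepsilon$-optimal continuation times, jointly measurable in a sense strong enough that $\hat\tau = \rho + \sigma^{X_\rho}\circ\theta_\rho$ is a bona fide $\mathbb{F}$-stopping time to which the strong Markov property applies. In continuous time there is no off-the-shelf theorem that delivers this. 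Universal completeness of $\mathbb{F}$ does give the measurability of $x\mapsto\check V(x)$ (that part of your argument is standard, cf.\ \cite{Ekstrom2008,Peskir2009}), but the classical selection theorems select points of Polish spaces from measurable correspondences, and the set of stopping times of a filtration is not such a space in any way that makes ``$\sigma$ is $\varepsilon$-optimal under $\mathds{P}^y$'' a measurable correspondence without substantial additional structure; moreover, even granted joint measurability of $(y,\omega)\mapsto\sigma^y(\omega)$, one must still prove that the composed time is a stopping time and that conditioning at $\rho$ produces $\mathds{E}^{X_\rho}[G_{\sigma^{y}}]\big|_{y=X_\rho}$. This is precisely the technical wall that the classical continuous-time treatments are organised to avoid, so the appeal to ``a measurable selection theorem'' leaves the crux unproved.

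The proof the paper relies on (following \cite{Attard2016,Ekstrom2008}) circumvents selection entirely. Writing $\mathcal{T}_\rho = \{\tau\in\mathcal{T} : \tau\ge\rho\}$ and using Galmarino-type arguments, one represents $\check V(X_\rho) = \esssup_{\tau\in\mathcal{T}_\rho}\mathds{E}^x\bigl[G_\tau \mid \mathcal{F}_\rho\bigr]$ (with $G$ your reward process), and then observes that this family of conditional payoffs is upward directed: given $\tau_1,\tau_2\in\mathcal{T}_\rho$, the pasted time $\tau_3 = \tau_1\mathds{1}_{H} + \tau_2\mathds{1}_{H^c}$, where $H\in\mathcal{F}_\rho$ is the event on which $\tau_1$'s conditional payoff dominates, is again a stopping time in $\mathcal{T}_\rho$ and its conditional payoff dominates both. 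Hence there is a single increasing sequence $\{\tau_n\}\subset\mathcal{T}_\rho$ whose conditional payoffs increase a.s.\ to the essential supremum, and monotone convergence gives $\mathds{E}^x[\check V(X_\rho)] = \lim_n \mathds{E}^x\bigl[\mathds{E}^x[G_{\tau_n}\mid\mathcal{F}_\rho]\bigr] \le \check V(x)$. The $\omega$-wise lattice structure thus substitutes for your pointwise $\varepsilon$-optimal selection: the supremum is attained along a countable sequence of genuine stopping times and no measurability of a $y$-indexed family is ever needed. To repair your proof, replace the selection step by this directedness argument; the alternative repair of taking $\sigma$ to be a hitting time of an $\varepsilon$-stopping region (which is universal in the starting point) does not help here, because proving such rules are $\varepsilon$-optimal essentially presupposes the supermartingale property you are trying to establish.
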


\subsection{Single player problem}\label{Section:Single-Player-OSP}

Suppose that in the Dynkin game, the strategy of player $-i$ is specified by a set $A \in \mathcal{B}(E_\Delta)$ on which that player stops. The next lemma expresses the resulting optimisation problem for player $i$ in terms of optimal stopping problems of different kinds for the subprocess $X^{A^{c}}$.

\begin{lemma}\label{Lemma:Characterisation-1}
For $x \in E$ consider the problems
\begin{align}\label{eq:Single-Payoff-OSP}
V^{A}(x) & \coloneqq \sup_{\tau \in \mathcal{T}}M^{x}(\tau,D_{A}), \\
\bar V^{A}(x) & \coloneqq \sup_{\tau \in \mathcal{T}}\bar M^{x}(\tau,D_{A}), \\
\tilde V^A(x) & \coloneqq \sup_{\tau  \in \mathcal{T}}\tilde M^{x}(\tau,D_{A}), \label{eq:Single-Payoff-OSP-Alt-2-2}
\end{align}
where for $\tau \in \mathcal{T}$ we have
\begin{align}\label{eq:Single-Payoff-Functional}
M^{x}(\tau,D_A) &\coloneqq \mathds{E}^{x}\left[
f(X_{\tau})\mathds{1}_{\{\tau < D_A\}} + g(X_{D_A})\mathds{1}_{\{D_A < \tau\}} + h(X_{D_A})\mathds{1}_{\{\tau = D_A\}}
\right],\\
\label{eq:Single-Payoff-Alt-2}
	\bar{M}^{x}(\tau,D_A) &\coloneqq \mathds{E}^{x}\bigl[f(X_{\tau})\mathds{1}_{\{\tau < D_A\}} + g(X_{D_A})\mathds{1}_{\{\tau \ge D_A\}}\bigr],\\
	\tilde{M}^{x}(\tau,D_A) &\coloneqq \mathds{E}^{x}\bigl[	\bigl\{f - g_{A}\bigr\}(X_{\tau})\mathds{1}_{\{\tau < D_A\}}\bigr], \label{eq:deftildem}
\end{align}
and $f$, $g$ and $h$ are functions satisfying Assumption~\ref{assumption:Preliminary-Assumption-1}. Then, recalling Definition \ref{def:red}, we have
	\begin{align}
	V^{A}(x) &= \bar{V}^{A}(x)  
 = g_{A}(x) + \tilde V^A(x).	
 \label{eq:equivpayoffs}\end{align} 
\end{lemma}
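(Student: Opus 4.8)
The plan is to establish the two equalities separately: first $V^{A} = \bar V^{A}$, and then $\bar V^{A}(x) = g_{A}(x) + \tilde V^{A}(x)$. Throughout I use that $f,g,h$ are bounded (so all expectations, and in particular $g_{A}$, are well defined and finite) and that $h \le g$ from Assumption~\ref{assumption:Preliminary-Assumption-1}.

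For the first equality I would begin with a pointwise comparison of the functionals. The integrands of $M^{x}(\tau,D_{A})$ and $\bar M^{x}(\tau,D_{A})$ coincide except on the tie event $\{\tau = D_{A}\}$, where the former awards $h(X_{D_{A}})$ and the latter $g(X_{D_{A}})$; since $h \le g$, we get $\bar M^{x}(\tau,D_{A}) \ge M^{x}(\tau,D_{A})$ for every $\tau \in \mathcal{T}$, hence $\bar V^{A} \ge V^{A}$ after taking suprema. For the reverse inequality I would exhibit, for each $\tau$, a competitor $\tau'$ in the $M$-problem achieving the same value that $\tau$ achieves in the $\bar M$-problem. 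The natural choice is $\tau' := \tau$ on $\{\tau < D_{A}\}$ and $\tau' := \infty$ on $\{\tau \ge D_{A}\}$. This is a stopping time: using the standard fact $\{\tau < D_{A}\} \in \mathcal{F}_{\tau}$, the set $\{\tau' \le t\} = \{\tau \le t\} \cap \{\tau < D_{A}\}$ lies in $\mathcal{F}_{t}$ for every $t$. On $\{\tau < D_{A}\}$ both functionals read $f(X_{\tau})$, while on $\{\tau \ge D_{A}\}$ we have $\tau' > D_{A}$ (since $D_{A} \le \zeta < \infty$ a.s.), so the $M$-functional returns $g(X_{D_{A}})$, exactly as $\bar M$ does on $\{\tau \ge D_{A}\}$. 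Thus $M^{x}(\tau',D_{A}) = \bar M^{x}(\tau,D_{A})$, giving $V^{A} \ge \bar V^{A}$ and hence equality.

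For the second equality I would prove the stronger identity $\bar M^{x}(\tau,D_{A}) = g_{A}(x) + \tilde M^{x}(\tau,D_{A})$ for each fixed $\tau$; taking the supremum over $\tau$ then gives the claim, since $g_{A}(x)$ does not depend on $\tau$. Writing $\mathds{1}_{\{\tau \ge D_{A}\}} = 1 - \mathds{1}_{\{\tau < D_{A}\}}$ and recalling $g_{A}(x) = \mathds{E}^{x}[g(X_{D_{A}})]$ from Definition~\ref{def:red}, this identity reduces, after cancelling the common term $\mathds{E}^{x}[f(X_{\tau})\mathds{1}_{\{\tau < D_{A}\}}]$, to showing
\[
\mathds{E}^{x}\bigl[g(X_{D_{A}})\mathds{1}_{\{\tau < D_{A}\}}\bigr] = \mathds{E}^{x}\bigl[g_{A}(X_{\tau})\mathds{1}_{\{\tau < D_{A}\}}\bigr].
\]
This is the crux and is where the strong Markov property enters: on $\{\tau < D_{A}\}$ the process has not entered $A$ by time $\tau$, so $D_{A} = \tau + D_{A}\circ\theta_{\tau}$, and conditioning on $\mathcal{F}_{\tau}$ gives $\mathds{E}^{x}[g(X_{D_{A}})\mathds{1}_{\{\tau < D_{A}\}} \mid \mathcal{F}_{\tau}] = \mathds{1}_{\{\tau < D_{A}\}}\,\mathds{E}^{X_{\tau}}[g(X_{D_{A}})] = \mathds{1}_{\{\tau < D_{A}\}}\,g_{A}(X_{\tau})$; taking expectations yields the display.

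I expect the main obstacle to be this strong Markov step, specifically the careful justification of the additivity $D_{A} = \tau + D_{A}\circ\theta_{\tau}$ on $\{\tau < D_{A}\}$ (and thereby $X_{D_{A}} = X_{D_{A}}\circ\theta_{\tau}$), together with the measurability check that makes the modified time $\tau'$ in the first part a genuine stopping time. Both are routine given the strong Markov property of the subprocess and the universally completed filtration, but they are the points where the argument could go wrong if the tie event or the behaviour at the lifetime $\zeta$ were mishandled.
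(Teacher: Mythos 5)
Your proposal is correct, and it reaches the first equality $V^{A}=\bar V^{A}$ by a genuinely different route than the paper. For the nontrivial direction $\bar V^{A}\le V^{A}$ the paper argues through the value function itself: it uses $V^{A}\ge f$ on $E$, the strong Markov identity $V^{A}(X_{D_{A}})=g(X_{D_{A}})$, and crucially the superharmonicity of $V^{A}$ on $A^{c}$ (Proposition \ref{Prop:Superaveraging-Property}) to get $\bar M^{x}(\tau,D_{A})\le \mathds{E}^{x}\bigl[V^{A}(X_{\tau\wedge D_{A}})\bigr]\le V^{A}(x)$; this requires the measurability of $V^{A}$ and the superaveraging machinery. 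You instead perform surgery on the stopping time, replacing $\tau$ by $\tau'$ equal to $\tau$ on $\{\tau<D_{A}\}$ and to $+\infty$ on $\{\tau\ge D_{A}\}$, and verify $M^{x}(\tau',D_{A})=\bar M^{x}(\tau,D_{A})$ pathwise. This is more elementary and self-contained: it avoids Proposition \ref{Prop:Superaveraging-Property} and any measurability question about $V^{A}$, at the price of leaning on $D_{A}\le\zeta<\infty$ a.s.\ (valid here since $E\subseteq[0,1]$, so $\tau'>D_{A}$ on $\{\tau\ge D_{A}\}$ and the tie term never activates) and on the standard fact $\{\tau<D_{A}\}\in\mathcal{F}_{\tau}$ needed to make $\tau'$ a stopping time in $\mathcal{T}$ --- both of which you correctly flag. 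What the paper's route buys is reuse of machinery it needs again anyway (superharmonicity of value functions reappears, e.g., in the proof of Theorem \ref{Theorem:Necessary-and-Sufficiency-Optimality}); what yours buys is independence from that machinery. For the second equality the two arguments coincide in substance: the paper simply asserts $\bar M^{x}(\tau,D_{A})-g_{A}(x)=\tilde M^{x}(\tau,D_{A})$ ``recalling Definition \ref{def:red}'', and your strong Markov computation --- $D_{A}=\tau+D_{A}\circ\theta_{\tau}$ on $\{\tau<D_{A}\}$, hence $\mathds{E}^{x}\bigl[g(X_{D_{A}})\mathds{1}_{\{\tau<D_{A}\}}\bigr]=\mathds{E}^{x}\bigl[g_{A}(X_{\tau})\mathds{1}_{\{\tau<D_{A}\}}\bigr]$ --- is precisely the justification the paper leaves implicit.
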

\begin{proof}

	
Let $\tau \in \mathcal{T}$, $x \in E$ be arbitrary. We have $\bar{M}^{x}(\tau,D_{A}) \ge M^{x}(\tau,D_{A})$ and therefore $\bar{V}^{A}(x) \geq V^{A}(x) $. To show the reverse inequality,
first recall that $x \mapsto V^{A}(x)$ is measurable. 
By assumption we have $V^A \geq f$ on $E$, so that $V^{A}(X_{\tau})\mathds{1}_{\{\tau < D_{A}\}} \ge f(X_{\tau})\mathds{1}_{\{\tau < D_{A}\}}$ a.s., while from the strong Markov property we have $V^{A}(X_{D_{A}}) = g(X_{D_{A}})$ a.s.. It follows from 
	\eqref{eq:Single-Payoff-Alt-2} and superharmonicity that
	\[
	\bar{M}^{x}(\tau,D_{A}) \le \mathds{E}^{x}\bigl[V^{A}(X_{\tau \wedge D_{A}})\bigr] \le V^{A}(x),
	\]
	and taking the supremum over $\tau$ we have $\bar{V}^{A}(x) = V^{A}(x)$. Finally, recalling Definition \ref{def:red} we have
	\begin{equation}
	\bar{M}(\tau,D_{A}) - g_{A}(x) = \mathds{E}^{x}\bigl[\bigl\{f - g_{A}\bigr\}(X_{\tau})\mathds{1}_{\{\tau < D_{A}\}}\bigr].
	\end{equation}
\end{proof}

\begin{remark}\label{Remark:Necessary-Condition-For-Stopping}
It follows from \eqref{eq:equivpayoffs} that
	\[
	V^A(x) = f(x) \iff \tilde V^A(x) = f(x) - g_{A}(x).
	\]
	That is, defining the {\em stopping region} to be the subset of $A^c$ on which the obstacle equals the value function, the optimal stopping problems $V^A(x)$ and $\tilde V^A(x)$ have identical stopping regions. An easy consequence is that if $x \in A^c$ lies in either stopping region then $f(x) \ge g_{A}(x)$, and that if $f \le g_{A}$ on $A^{c}$ then $\tau = D_A$ is optimal in \eqref{eq:Single-Payoff-OSP-Alt-2-2}.
\end{remark}

\section{Existence of equilibria}\label{Section:Existence-Of-Equilibria}

In this section we exploit the link between the games to show, firstly, that the existence of a solution to the GNEP with utility functions given by \eqref{eq:Auxiliary-GNZSG-Utilities} implies the existence of a threshold-type solution to the DP (Theorems \ref{Theorem:Existence-Generalised-Nash-Equilibrium-Stopping} and  \ref{Theorem:Necessary-and-Sufficiency-Optimality}). These results are then applied to show the existence of novel Nash equilibria in the DP. More precisely we will show that the following condition on the geometry of the reward functions is sufficient for the existence of an equilibrium:
\smallskip

{\bf Condition G1.}
\label{Assumption:Threshold-Geometry}
	There exist points $a \in (0,1) \text{ and } b \in (0,1) \text{ such that}$
	\begin{align*}
	(i) \quad & f_{1} \text{ is concave on } [0,a] \text{ and is convex on } [a,1] \\
	(ii) \quad & f_{2} \text{ is convex on } [0,b] \text{ and is concave on } [b,1]\\
	(iii) \quad & \text{If } b \le a, \text{ then } f_{i} < g_{i} \text{ on } [b,a] \text{ for } i = 1,2.
	\end{align*}

\smallskip
The case $a > b$ is novel when compared with the existing literature. It is interesting to note that in the case $a \le b$, which is analysed in \cite{Attard2015} and \cite{DeAngelis2015}, the generalised problem~\eqref{eq:Auxiliary-GNEP} reduces to a classical game (that is, where each player's strategy space does not depend on the other player's chosen strategy). This is because the dependency between the players' strategies (which is specified by the choice of $\mathcal{C}$) allows some additional control on the equilibria, which is required when $a>b$. The case when at least one of the functions $f_i$ is not differentiable is also novel.

\subsection{Preliminaries}

A solution to the GNEP is known to exist under the following condition (see for example \cite{Arrow1954} and \cite{Facchinei2007}):
\begin{description}
\item[\textbf{Condition U.}]\mbox{}
\begin{enumerate}[(i)] 
		\item For each fixed $s_2 \in \mathcal{S}_{2}$, the mapping $s_{1} \mapsto U_{1}(s_1,s_2)$ is quasi-concave on $K_1(s_2)$. For each fixed $s_1 \in \mathcal{S}_1$, the mapping $s_{2} \mapsto U_{2}(s_1,s_2)$ is quasi-concave on $K_2(s_1)$. 
		\item The utility functions $s \mapsto U_{i}(s)$ for $i = 1,2$ are continuous in $s = (s_{1},s_{2})$.
	\end{enumerate}
\end{description}
For convenience we record the necessary argument here:
\begin{lemma}\label{Lemma:Existence-Solution-GNEP}
Suppose Assumption~1' and Condition U hold. Then there exists a solution $\bigl(s^{*}_{1},s^{*}_{2}\bigr) \in \mathcal{C}$ to the GNEP \eqref{eq:Auxiliary-GNEP} satisfying $s_{1}^{*} < s_{2}^{*}$.
\end{lemma}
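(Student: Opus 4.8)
The plan is to obtain existence from the standard theory of abstract economies and then to verify the strict inequality directly. Since $\mathcal{S}_1=[0,a]$ and $\mathcal{S}_2=[b,1]$ are nonempty, compact and convex, and the constraint maps in \eqref{eq:01} are nonempty-, compact- and convex-valued and continuous in the strategy of the other player, Condition~U places us within the hypotheses of the Arrow--Debreu social-equilibrium theorem (cf.\ \cite{Arrow1954,Facchinei2007}). Concretely, I would introduce the best-response correspondences
\[
B_1(s_2)=\argmax_{s_1\in K_1(s_2)}U_1(s_1,s_2),\qquad B_2(s_1)=\argmax_{s_2\in K_2(s_1)}U_2(s_1,s_2),
\]
and the product map $B(s_1,s_2)=B_1(s_2)\times B_2(s_1)$ on the compact convex set $\mathcal{S}$, whose fixed points are precisely the solutions of \eqref{eq:Auxiliary-GNEP}.

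The verification of Kakutani's hypotheses for $B$ is where the care is needed, and I expect it to be the main obstacle. Nonemptiness and convexity of the values follow from Condition~U: for fixed $s_2$ the map $s_1\mapsto U_1(s_1,s_2)$ is upper semicontinuous (being continuous into $[-\infty,+\infty)$) on the compact set $K_1(s_2)$ and quasi-concave there, so its $\argmax$ is a nonempty convex set, and similarly for $B_2$. Closedness of the graph of $B$ (equivalently, upper hemicontinuity, since $\mathcal{S}$ is compact) follows from Berge's maximum theorem, using the continuity of $U_i$ from Condition~U(ii) together with the continuity of the constraint maps $K_i$. The delicate point is the value $-\infty$ taken by $U_i$ on the diagonal $\{x=y\}$, which is nonempty in $\mathcal{C}$ exactly when $b\le a$; this is precisely why Assumption~1' is imposed, since it guarantees (via the behaviour of \eqref{eq:Auxiliary-GNZSG-Utilities}, \eqref{eq:gexp1} as $x\uparrow y$, where the numerator tends to $f_1(y)-g_1(y)<0$ while the denominator vanishes) that $U_i$ extends continuously with value $-\infty$ on the diagonal and is bounded above on $\mathcal{C}$. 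Thus Berge's theorem applies and Kakutani's theorem yields a fixed point $(s_1^*,s_2^*)\in\mathcal{C}$, that is, a solution of the GNEP.

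Finally I would establish the strict inequality $s_1^*<s_2^*$, which is immediate once existence is in hand and does not itself require Assumption~1'. Since $(s_1^*,s_2^*)\in\mathcal{C}$ we have $s_1^*\le s_2^*$, so it suffices to exclude equality. Because $s_2^*\in[b,1]$ with $b>0$, the point $s_1=0$ is a feasible deviation for player~1 (it lies in $K_1(s_2^*)=[0,s_2^*\wedge a]$) with $0<s_2^*$; using the boundary conventions $f_1(0)=0$ and, from \eqref{eq:gexp1}, $g_{1,[s_2^*,1]}(0)=g_1(s_2^*)\cdot 0/s_2^*=0$, the first line of \eqref{eq:Auxiliary-GNZSG-Utilities} gives $U_1(0,s_2^*)=0$. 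The equilibrium property \eqref{eq:Auxiliary-GNEP} then forces $U_1(s_1^*,s_2^*)\ge U_1(0,s_2^*)=0>-\infty$, and since $U_1=-\infty$ whenever its first argument is at least its second, we conclude $s_1^*<s_2^*$. (The symmetric deviation $s_2=1$ for player~2, for which $f_2(1)=0$ and, from \eqref{eq:gexp2}, $g_{2,[0,s_1^*]}(1)=g_2(s_1^*)\cdot 0/(1-s_1^*)=0$ give $U_2(s_1^*,1)=0$, reconfirms the same conclusion.)
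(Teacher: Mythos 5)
Your proposal is correct and takes essentially the same approach as the paper: the paper checks the same hypotheses (compact, convex-valued, continuous $K_i$; continuous, quasi-concave $U_i$ on the relevant sets) and then invokes Lemma~2.5 of \cite{Arrow1954}, which is precisely the Kakutani--Berge best-response argument you write out. The paper's closing assertion that $s_1^*<s_2^*$ follows ``from the construction'' of \eqref{eq:Auxiliary-GNZSG-Utilities} is the one-line version of your deviation-to-$s_1=0$ computation using $f_1(0)=g_{1,[s_2^*,1]}(0)=0$.
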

\begin{proof}
	For $i = 1,2$ the correspondence $K_{i}$ is compact and convex valued. Furthermore, using the notion of continuity for set-valued maps in \cite{Rockafellar1998}, we can confirm that $K_{1}$ and $K_{2}$ are continuous. Under the present hypotheses, $U_{i}$ is continuous on $\mathcal{S}$ and has the quasi-concavity property specified in Condition U.
	Therefore by Lemma 2.5 in \cite{Arrow1954}, there exists a solution $s^{*}$ to \eqref{eq:Auxiliary-GNEP}. From the construction \eqref{eq:Auxiliary-GNZSG-Utilities}, this solution must satisfy $s_{1}^{*} < s_{2}^{*}$.
\end{proof}

\begin{remark}
For possible extensions of Lemma \ref{Lemma:Existence-Solution-GNEP} see also \cite{Facchinei2007,He2016} and references therein.
\end{remark}
 
Before presenting the main result of this section we need the following fact:

\begin{lemma}\label{Lemma:Quasi-Concavity}
	Suppose $\mathcal{D} \subseteq \mathbb{R}$ is convex, $f \colon \mathcal{D} \to \bar{\mathbb{R}}$ is (strictly) concave, and $\varphi \colon \mathcal{D} \to (0,\infty)$ is linear. Then the function $\frac{f}{\varphi} \colon \mathcal{D} \to \bar{\mathbb{R}}$ is (strictly) quasi-concave.
\end{lemma}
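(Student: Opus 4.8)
The plan is to reduce the quasi-concavity of the quotient $F := f/\varphi$ to the convexity of the superlevel sets of an auxiliary concave function, the key leverage being the strict positivity of $\varphi$, which preserves the direction of inequalities when the denominator is cleared.

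First I would fix a (finite) level $\alpha \in \mathbb{R}$ and rewrite the superlevel set of $F$. Since $\varphi(x) \in (0,\infty)$ for every $x \in \mathcal{D}$, for each $x$ we have the equivalence $F(x) = f(x)/\varphi(x) \ge \alpha \iff f(x) \ge \alpha\varphi(x) \iff (f - \alpha\varphi)(x) \ge 0$, and this remains valid when $f(x) \in \{\pm\infty\}$ because $\alpha\varphi(x)$ is finite. Hence $L^{+}_{\alpha} = \{x \in \mathcal{D} \colon (f-\alpha\varphi)(x) \ge 0\}$. Next I would observe that $h_{\alpha} := f - \alpha\varphi$ is concave on the convex set $\mathcal{D}$: indeed $f$ is concave by hypothesis and $-\alpha\varphi$ is linear, hence concave, and the sum of concave functions is concave. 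The zero-superlevel set of a concave function is convex, so $L^{+}_{\alpha}$ is convex; as $\alpha$ was arbitrary, $F$ is quasi-concave by Definition~\ref{Definition:Quasi-Concave-Function}.

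For the strict statement I would argue directly through the characterisation \eqref{eq:Quasi-Concavity-Alt}. Take $x_{1} \neq x_{2}$ in $\mathcal{D}$ and $\theta \in (0,1)$, write $x_{\theta} = \theta x_{1} + (1-\theta)x_{2}$, and set $\alpha := \min(F(x_{1}),F(x_{2}))$. Then $f(x_{j}) \ge \alpha\varphi(x_{j})$ for $j = 1,2$, so strict concavity of $f$ gives $f(x_{\theta}) > \theta f(x_{1}) + (1-\theta)f(x_{2}) \ge \alpha\bigl(\theta\varphi(x_{1}) + (1-\theta)\varphi(x_{2})\bigr) = \alpha\varphi(x_{\theta})$, where the final equality uses the linearity of $\varphi$. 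Dividing by $\varphi(x_{\theta}) > 0$ yields $F(x_{\theta}) > \alpha = \min(F(x_{1}),F(x_{2}))$, which is exactly the strict form of \eqref{eq:Quasi-Concavity-Alt}.

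I do not anticipate a genuine obstacle, as the argument is short; the lemma is a routine ingredient for the continuity/quasi-concavity hypotheses of Lemma~\ref{Lemma:Existence-Solution-GNEP}. The only points deserving care are that the positivity of $\varphi$ is precisely what makes $L^{+}_{\alpha}$ correspond to a superlevel set of the \emph{concave} function $h_{\alpha}$ (rather than of a convex one, which would destroy the conclusion), and the handling of the extended-real values: these reduce to triviality when $\min(F(x_{1}),F(x_{2})) = -\infty$ and otherwise collapse to the finite computation above. I would flag the latter briefly rather than dwell on it.
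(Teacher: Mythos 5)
Your proof is correct and follows essentially the same route as the paper: for each level $\alpha$ you rewrite the superlevel set of $f/\varphi$, using the positivity of $\varphi$, as the zero-superlevel set of the concave auxiliary function $f - \alpha\varphi$, which is exactly the paper's argument. The only difference is in the strict case, where the paper simply states that the argument "follows in the same way", while you verify the strict form of \eqref{eq:Quasi-Concavity-Alt} directly; this is a compatible and arguably cleaner way of filling in that deferred step.
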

\begin{proof}
	In the case of concavity, for each $\alpha \in \mathbb{R}$ define a function $F_{\alpha} \colon \mathcal{D} \to \bar{\mathbb{R}}$ by $F_{\alpha}(x) = f(x) - \alpha \varphi(x)$. This function is concave on $\mathcal{D}$, and therefore quasi-concave, which means the superlevel set $\left\{x \in \mathcal{D} \colon F_{\alpha}(x) \ge 0 \right\}$ is convex for every $\alpha \in \mathbb{R}$. The function $\frac{f}{\varphi}$ is quasi-concave on $\mathcal{D}$ since for every $\alpha \in \mathbb{R}$,
	\[
	\left\{x \in \mathcal{D} \colon \left(\tfrac{f}{\varphi}\right)(x) \ge \alpha \right\} = \left\{x \in \mathcal{D} \colon f(x) \ge \alpha \varphi(x) \right\} = \left\{x \in \mathcal{D} \colon F_{\alpha}(x) \ge 0 \right\}.
	\]
	The proof for strictly concave $f$ follows in the same way.
\end{proof} 

\subsection{Existence results}
The GNEP is used in this section to establish the existence of equilibria in the DP. This both allows $a>b$ in Condition G1 and avoids the need for smoothness assumptions.

\begin{theorem}\label{Theorem:Existence-Generalised-Nash-Equilibrium-Stopping}
	Under Condition G1, there exists a pair $(\ell_{*},r_{*}) \in [0,a] \times [b,1]$ such that $(D_{[0,\ell_{*}]},D_{[r_{*},1]})$ is a solution to the DP.
\end{theorem}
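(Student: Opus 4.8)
The plan is to transfer existence from the GNEP to the DP using the utility functions in \eqref{eq:Auxiliary-GNZSG-Utilities}, in three stages: (1) verify the hypotheses of Lemma~\ref{Lemma:Existence-Solution-GNEP}; (2) extract from the resulting GNEP equilibrium $(s_1^*,s_2^*)=:(\ell_*,r_*)$ the candidate thresholds; and (3) show that the stopping times $(D_{[0,\ell_*]},D_{[r_*,1]})$ actually solve the DP~\eqref{eq:NEP-Pure-Strategies}. The main work is in stages (1) and (3).

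For stage (1) I would check Condition~U. For player~1 with $r=s_2$ fixed, the obstacle in the relevant single-player problem is $f_1-g_{1,[r,1]}$, and by \eqref{eq:gexp1} the reduced function $g_{1,[r,1]}(x)=g_1(r)\,x/r$ is linear on $[0,r]$. Hence on $K_1(r)=[0,r\wedge a]$ the numerator $f_1-g_{1,[r,1]}$ of $U_1(\cdot,r)$ is concave, being the difference of the concave $f_1$ (Condition~G1(i), using $x\le a$) and a linear function; the denominator $y-x$ is linear and strictly positive on $\{x<y\}$. Lemma~\ref{Lemma:Quasi-Concavity} then gives quasi-concavity of $s_1\mapsto U_1(s_1,r)$ on $K_1(r)$, and symmetrically for player~2 using G1(ii) and \eqref{eq:gexp2}. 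Continuity of the $U_i$ on $\mathcal C$ follows from continuity of the reduced functions $g_{i,A}$ (stated after Definition~\ref{def:red}), with the boundary case $x=y$ handled by Assumption~1' (which forces $g_i>f_i$ on any overlap $[b,a]$, preventing the numerator from vanishing and keeping $U_i$ bounded above and continuous up to the diagonal). Lemma~\ref{Lemma:Existence-Solution-GNEP} then yields $(\ell_*,r_*)\in\mathcal C$ with $\ell_*<r_*$.

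For stage (3) I would fix player~2 at $D_{[r_*,1]}$ and apply Lemma~\ref{Lemma:Characterisation-1} with $A=[r_*,1]$: player~1's best response is the optimal stopping problem $\tilde V^A$ with obstacle $f_1-g_{1,A}$, and by Remark~\ref{Remark:Necessary-Condition-For-Stopping} its stopping region coincides with that of $V^A$. The GNEP optimality of $\ell_*$ says that $\ell_*$ maximises the secant slope $U_1(\cdot,r_*)=(f_1-g_{1,[r_*,1]})/(r_*-\cdot)$ over $[0,\ell_*\vee\cdots]$; I would use the geometric characterisation of optimal stopping for a one-dimensional diffusion (here Brownian motion, so concave majorants, as in \cite{Dayanik2003}) to argue that this slope-maximising point is exactly the threshold at which the smallest concave majorant of the obstacle leaves the obstacle, so that $D_{[0,\ell_*]}$ is optimal for player~1 against $r_*$. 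The symmetric argument handles player~2. The final point is consistency: I must confirm that the two single-player problems are mutually consistent at $(\ell_*,r_*)$, i.e. that the set on which player~$-i$ stops in player~$i$'s problem is indeed $[r_*,1]$ (resp.\ $[0,\ell_*]$), which is guaranteed because the same pair solves both GNEP optimality conditions simultaneously. The detailed equivalence between the GNEP slope condition and the stopping-region threshold is deferred to Theorem~\ref{Theorem:Necessary-and-Sufficiency-Optimality}, which I would cite.

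The hard part will be stage (3): precisely linking ``$\ell_*$ maximises the secant slope $U_1(\cdot,r_*)$'' to ``$D_{[0,\ell_*]}$ is an optimal stopping time for the obstacle $f_1-g_{1,[r_*,1]}$''. This requires the geometric theory of optimal stopping — identifying the value $\tilde V^A$ with the least concave majorant of the obstacle and reading off the stopping threshold as the contact point — together with care at the endpoints and in the degenerate case where the thresholds coincide or touch the constraint boundaries $a,b$. I expect this to be exactly the content of the companion Theorem~\ref{Theorem:Necessary-and-Sufficiency-Optimality}, so in the proof of the present theorem I would invoke that result and concentrate on verifying Condition~U so as to produce the GNEP equilibrium that feeds into it.
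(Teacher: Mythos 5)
Your overall architecture matches the paper's: verify Condition U (quasi-concavity via Lemma~\ref{Lemma:Quasi-Concavity}, continuity with the diagonal handled by Assumption~1'), obtain a GNEP equilibrium $(\ell_*,r_*)$ from Lemma~\ref{Lemma:Existence-Solution-GNEP}, and transfer to the DP via Theorem~\ref{Theorem:Necessary-and-Sufficiency-Optimality}. However, there is a genuine gap at the hand-off between your stages (2) and (3). The GNEP delivers optimality of $\ell_*$ only over the \emph{feasible} set $K_1(r_*)=[0,r_*\wedge a]$, i.e. the constrained condition \eqref{eq:Nash-Equilbrium-Deterministic-Game}, whereas the hypothesis of Theorem~\ref{Theorem:Necessary-and-Sufficiency-Optimality} is the \emph{unconstrained} condition \eqref{eq:ex1}: $U_1(x,r_*)\le U_1(\ell_*,r_*)$ for \emph{all} $x\in[0,r_*)$. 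These are not the same, and bridging them is a substantive step which the paper performs first: it proves the inequalities \eqref{eq:ext1}--\eqref{eq:ext2}, i.e. that the supremum of $U_1(\cdot,r)$ over all of $[0,r)$ is already attained on $[0,a]$, using the convexity of $f_{1}-g_{1,[r,1]}$ on $[a,r]$ (Condition G1(i) together with the linearity of $g_{1,[r,1]}$ there) and the inequality $f_{1}(r)\le g_{1}(r)=g_{1,[r,1]}(r)$; symmetrically for player 2.

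This omission is material, not cosmetic: your proposal nowhere uses the convexity halves of Condition G1 (convexity of $f_1$ on $[a,1]$ and of $f_2$ on $[0,b]$), only the concavity halves, and the theorem is false without them. If $f_1$ had a concave hump on $(a,r_*)$ rising above the secant line joining $\bigl(\ell_*,\,f_1(\ell_*)-g_{1,[r_*,1]}(\ell_*)\bigr)$ to $(r_*,0)$, then $(\ell_*,r_*)$ would still be a GNEP equilibrium (the GNEP only sees $f_1$ on $[0,a]$), but the least concave majorant of player 1's obstacle on $[0,r_*]$ would leave the obstacle inside $(a,r_*)$, so $D_{[0,\ell_*]}$ would not be a best response to $D_{[r_*,1]}$ and the pair would not solve the DP. Consequently the concave-majorant equivalence you defer to Theorem~\ref{Theorem:Necessary-and-Sufficiency-Optimality} cannot be invoked until you have first extended the constrained GNEP optimality of $\ell_*$ to the whole interval $[0,r_*)$ (and of $r_*$ to $(\ell_*,1]$); that extension --- a short convexity estimate in the paper --- is the missing idea in your argument.
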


\begin{proof}
We begin by noting that for each $r \in [0,1]$ and $\ell \in [0,1]$,
	\begin{align}
	\sup_{x \in [0,r)} U_{1}(x,r) &\leq \sup_{x \in [0,a]} U_{1}(x,r), \label{eq:ext1}\\
		\sup_{x \in (\ell,1]} U_{2}(\ell,x) &\leq \sup_{x \in [b,1]} U_{2}(\ell,x). \label{eq:ext2}
	\end{align}
For $r \in (a,1]$, eq. \eqref{eq:ext1} follows from the 
convexity of $f_{1} - g_{1,[r,1]}$ on $[a,r]$ and the fact that $f_{1}(r) \le g_{1}(r) = g_{1,[r,1]}(r)$:
	\begin{align*}
	\frac{f_{1}(x) - g_{1,[r,1]}(x)}{r - x} & \le \frac{f_{1}(a) - g_{1,[r,1]}(a)}{r - a} + \left(\frac{f_{1}(r) - g_{1,[r,1]}(r)}{r - a}\right)\left(\frac{x-a}{r-x}\right)  \\
	& \le \frac{f_{1}(a) - g_{1,[r,1]}(a)}{r - a}, \quad \forall x \in (a,r). \\
	\end{align*}
Similar reasoning establishes \eqref{eq:ext2}.

Using Condition G1 and Lemma~\ref{Lemma:Quasi-Concavity}, we can verify the hypotheses of 	
Lemma~\ref{Lemma:Existence-Solution-GNEP} and assert the existence of a pair $\bigl(\ell,r\bigr) \in [0,a] \times [b,1]$ with $\ell < r$ such that
	\begin{equation}\label{eq:Nash-Equilbrium-Deterministic-Game}
	\begin{cases}
	U_{1}(x,r) \le U_{1}(\ell,r),
	 \quad \forall x \in [0,r \wedge a], \\
	 	U_{2}(\ell,y) \le U_{2}(\ell,r),
	\quad \forall y \in [\ell \vee b,1].
	\end{cases}
	\end{equation}
The pair $(\ell,r)$ that satisfies \eqref{eq:Nash-Equilbrium-Deterministic-Game} therefore also satisfies 
		\begin{align}\label{eq:ex1}
	U_{1}(x,r) &\le U_{1}(\ell,r),
	 \quad \forall x \in [0,r), \\
	 	U_{2}(\ell,y) &\le U_{2}(\ell,r),
	\quad \forall y \in (\ell,1], \label{eq:ex2}
	\end{align}
	and the result follows from the next theorem.
\end{proof}
	
\begin{theorem}\label{Theorem:Necessary-and-Sufficiency-Optimality}
	For every $r \in [b,1]$, a point $\ell_{r} \in [0,a]$ with $\ell_{r} < r$ satisfies \eqref{eq:ext1} if and only if
	\begin{equation}\label{eq:Threshold-Optimisation-1}
	V_{1}^{[r,1]}(x) \coloneqq \sup\limits_{\tau_{1} \in \mathcal{T}}M^{x}_{1}(\tau_{1},D_{[r,1]}) = M^{x}_{1}(D_{[0,\ell_{r}]},D_{[r,1]}),\quad \forall x \in [0,1].
	\end{equation}
	Similarly, for every $\ell \in [0,a]$, a point $r_{\ell} \in [b,1]$ with $\ell < r_{\ell}$ satisfies \eqref{eq:ext2} if and only if
	\begin{equation}\label{eq:Threshold-Optimisation-2}
	V_{2}^{[0,\ell]}(x) \coloneqq \sup\limits_{\tau_{2} \in \mathcal{T}}M^{x}_{2}(D_{[0,\ell]},\tau_{2}) = M^{x}_{2}(D_{[0,\ell]},D_{[r_{\ell},1]}),\quad \forall x \in [0,1].
	\end{equation}
\end{theorem}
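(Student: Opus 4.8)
The plan is to reduce player~1's problem to the reduced problem $\tilde V^{[r,1]}$ of Lemma~\ref{Lemma:Characterisation-1} and to read both conditions off the geometry of its obstacle. Write $\psi \coloneqq f_{1} - g_{1,[r,1]}$ for the obstacle on $[0,r)$. By \eqref{eq:equivpayoffs} we have $V_{1}^{[r,1]} = g_{1,[r,1]} + \tilde V^{[r,1]}$, and by Remark~\ref{Remark:Necessary-Condition-For-Stopping} the two problems share the same stopping region, so $D_{[0,\ell_{r}]}$ solves \eqref{eq:Threshold-Optimisation-1} if and only if it is optimal for $\tilde V^{[r,1]}$. The computation driving everything is that, since $X^{[0,r)}$ is a Brownian motion absorbed at $\{0\}\cup\{r\}$ and $g_{1,[r,1]}$ is harmonic (hence linear) on $(0,r)$ by Definition~\ref{def:red}, a gambler's-ruin evaluation of \eqref{eq:deftildem} for the threshold $D_{[0,\ell]}$ with $\ell<r$ gives
\[
\tilde M^{x}(D_{[0,\ell]},D_{[r,1]}) = \psi(\ell)\,\frac{r-x}{r-\ell} = U_{1}(\ell,r)\,(r-x),\qquad x\in[\ell,r],
\]
and equals $\psi(x)$ for $x\in[0,\ell]$. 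I denote this function of $x$ by $W_{\ell}$; note it is exactly the payoff of an admissible strategy, so $W_{\ell}\le\tilde V^{[r,1]}$ always.

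For the forward implication, suppose $\ell_{r}$ maximises $U_{1}(\cdot,r)$ on $[0,r)$, i.e.\ \eqref{eq:ex1} holds. The argument around \eqref{eq:ext1} places the maximiser in $[0,a]$, so $\ell_{r}\le a$ and $\psi$ is concave on $[0,\ell_{r}]$ by Condition~G1(i) and linearity of $g_{1,[r,1]}$. I would then verify that $W_{\ell_{r}}$ is the least superharmonic majorant of $\psi$. The majorant property is immediate: on $[0,\ell_{r}]$ it is equality, while on $(\ell_{r},r)$ one has $W_{\ell_{r}}(x)-\psi(x)=(r-x)\bigl(U_{1}(\ell_{r},r)-U_{1}(x,r)\bigr)\ge0$ by \eqref{eq:ex1}. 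Nonnegativity follows since $U_{1}(\ell_{r},r)\ge U_{1}(0,r)=0$ controls the linear piece and $\psi$ has nonnegative endpoints on $[0,\ell_{r}]$; and $W_{\ell_{r}}$ is concave, hence superharmonic on $(0,r)$, being concave on each sub-interval with the junction condition at $\ell_{r}$ reducing, through the displayed identity, to the one-sided maximality of $U_{1}(\cdot,r)$ at $\ell_{r}$. Then $\mathds{E}^{x}[\psi(X_{\tau})\mathds{1}_{\{\tau<D_{[r,1]}\}}]\le\mathds{E}^{x}[W_{\ell_{r}}(X_{\tau\wedge D_{[r,1]}})]\le W_{\ell_{r}}(x)$ for every $\tau$ by superharmonicity, giving $\tilde V^{[r,1]}\le W_{\ell_{r}}$ and hence $\tilde V^{[r,1]}=W_{\ell_{r}}$ with $D_{[0,\ell_{r}]}$ optimal, which is \eqref{eq:Threshold-Optimisation-1}.

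Conversely, if \eqref{eq:Threshold-Optimisation-1} holds then $\tilde V^{[r,1]}=W_{\ell_{r}}$, which is concave (superharmonic by Proposition~\ref{Prop:Superaveraging-Property}), satisfies $W_{\ell_{r}}(r)=0$, and dominates $\psi$. For $x\in[\ell_{r},r)$ the majorant property reads $\psi(x)\le U_{1}(\ell_{r},r)(r-x)$, i.e.\ $U_{1}(x,r)\le U_{1}(\ell_{r},r)$. For $x\in[0,\ell_{r})$, where $W_{\ell_{r}}=\psi$, concavity together with $W_{\ell_{r}}(r)=0$ gives $\psi(\ell_{r})\ge\psi(x)\,\tfrac{r-\ell_{r}}{r-x}$, which rearranges to the same inequality; hence $\ell_{r}$ satisfies \eqref{eq:ex1}. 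The statements for player~2 follow by the symmetric argument, replacing $f_{1},g_{1,[r,1]}$ by $f_{2},g_{2,[0,\ell]}$ and reflecting the interval, using Condition~G1(ii).

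The main obstacle is the verification, in the absence of any differentiability of the reward functions, that $W_{\ell_{r}}$ is the value function: the classical smooth-fit condition at the free boundary is replaced throughout by the maximality inequality \eqref{eq:ex1}, which supplies simultaneously the majorant property on the continuation set and the concavity of $W_{\ell_{r}}$ at the junction. Some care is also needed with the convention $\psi(\Delta)=0$ and the absorbing behaviour at the endpoints $0$ and $r$ when invoking the subprocess superharmonicity of Definition~\ref{Definition:Superharmonic-Function}.
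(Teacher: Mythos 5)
Your proposal is correct and takes essentially the same route as the paper's proof: your $W_{\ell_r}$ is exactly the paper's candidate function $u_r$, your forward direction is the same verification that it is the smallest nonnegative concave (superharmonic) majorant of the obstacle $f_1 - g_{1,[r,1]}$ (where the paper cites Proposition~3.2 of \cite{Dayanik2003}, you argue the supermartingale domination directly, which is equivalent), and your converse uses the same concavity/superharmonicity of $V_1^{[r,1]} - g_{1,[r,1]}$, which the paper phrases via explicit gambler's-ruin hitting probabilities rather than chord inequalities. There is no gap of substance.
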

\begin{proof}
	We only show \eqref{eq:ex1}$\iff$\eqref{eq:Threshold-Optimisation-1}, since \eqref{eq:ex2}$\iff$\eqref{eq:Threshold-Optimisation-2} follows by similar arguments.	Let $r \in [b,1]$ and $\ell_{r} \in [0,a]$ with $\ell_{r} < r$ be given. We will make repeated use of the function 
	\begin{equation}\label{eq:Threshold-Auxiliary-Optimisation-Proof-3_0}
u_{r}(x) \coloneqq	M^{x}_{1}(D_{[0,\ell_{r}]},D_{[r,1]}) - g_{1,[r,1]}(x) = 
	\begin{cases}
f_{1}(x) - g_{1,[r,1]}(x),& x \in [0,\ell_{r}), \\
\left(f_{1}(\ell_{r}) - g_{1,[r,1]}(\ell_{r})\right)\frac{r-x}{r-\ell_{r}},& x \in [\ell_{r},r), \\
0,& x \in [r,1],
\end{cases}
	\end{equation}
where the middle line is a straightforward consequence of the identities in
Appendix~\ref{Section:Payoff-Derivation} and the fact that, for $x \in [0,r]$, we have
	\begin{align}
	g_{1}(r)\frac{x-\ell_{r}}{r-\ell_{r}} - g_{1,[r,1]}(x) & = g_{1}(r)\left(\frac{x-\ell_{r}}{r-\ell_{r}} - \frac{x}{r}\right) \nonumber\\
	& = g_{1}(r)\left(\frac{r(x-\ell_{r}) - x(r-\ell_{r})}{r(r-\ell_{r})}\right) \nonumber\\
	& = -g_{1}(r)\left(\frac{\ell_{r}}{r}\right)\left(\frac{r-x}{r-\ell_{r}}\right) = -g_{1,[r,1]}(\ell_{r})\frac{r-x}{r-\ell_{r}}.\label{eq:Threshold-Auxiliary-Optimisation-Proof-2}
	\end{align}

		\vskip0.5em 

		Sufficiency ($\impliedby$).
				
Suppose that \eqref{eq:Threshold-Optimisation-1} is satisfied. Substituting this in \eqref{eq:Threshold-Auxiliary-Optimisation-Proof-2},
	dividing both sides of \eqref{eq:Threshold-Auxiliary-Optimisation-Proof-3_0} by $r-x$ (when $x<r$), 
	and using the definition \eqref{eq:Auxiliary-GNZSG-Utilities} of $U_{1}$, we obtain
	\begin{equation}\label{eq:Threshold-Auxiliary-Optimisation-Proof-4}
	\frac{V_{1}^{[r,1]}(x) - g_{1,[r,1]}(x)}{r - x} = \begin{cases}
	U_{1}(x,r),& \forall x \le \ell_{r} \\
	U_{1}(\ell_{r},r),&\forall \ell_{r} < x < r.
	\end{cases}
	\end{equation}
	It is easy to see that $V_{1}^{[r,1]}(r) = g_{1}(r) = g_{1,[r,1]}(r)$ and $V_{1}^{[r,1]}(x) \ge f_{1}(x)$ for all $x \in [0,r]$. Therefore when $x \in (\ell_r,r)$ we have
	\begin{equation*}
	  U_{1}(\ell_r,r) \ge U_{1}(x,r).
	\end{equation*}
To treat the case $x \in [0,\ell_{r}]$, note from Lemma \ref{Lemma:Characterisation-1} 
that 
$x \mapsto V_{1}^{[r,1]}(x) - g_{1,[r,1]}(x)$ is the value function of an optimal stopping problem for a subprocess as in, for example, \cite{Dayanik2003} and, as such, is non-negative and superharmonic in $(0,r)$. For $0 \le x < y \le 1$ define $\tau_{x,y} = D_{\{x\}} \wedge D_{\{y\}}$. Using superharmonicity and the fact that $X$ is a positively recurrent diffusion, for every $0 \le x \le \ell_{r}$ we have,
	\begin{align}
	V_{1}^{[r,1]}(\ell_{r}) - g_{1,[r,1]}(\ell_{r}) \ge {} & \mathds{E}^{\ell_{r}}\bigl[V_{1}^{[r,1]}(X_{\tau_{x,r}}) - g_{1,[r,1]}(X_{\tau_{x,r}})\bigr] \nonumber \\
	= {} & \left(V_{1}^{[r,1]}(x) - g_{1,[r,1]}(x)\right)\mathds{E}^{\ell_{r}}\bigl[\mathds{1}_{\{D_{\{x\}} < D_{\{r\}}\}}\bigr] \nonumber \\
	= {} & \left(V_{1}^{[r,1]}(x) - g_{1,[r,1]}(x)\right)\frac{r-\ell_{r}}{r-x}.\label{eq:Threshold-Auxiliary-Optimisation-Proof-5}
	\end{align}
	Since for all $0 \le x \le \ell_{r}$ we have $V_{1}^{[r,1]}(x) = f_{1}(x)$, \eqref{eq:Threshold-Auxiliary-Optimisation-Proof-5} gives
	\[
	U_{1}(x,r) \le U_{1}(\ell_r,r), \quad \forall x \in [0,\ell_{r}],
	\]
	establishing \eqref{eq:ex1} with $\ell=\ell_r$.
\vskip0.5em 

Necessity ($\implies$).
\vskip0.5em
Suppose that the pair $(\ell_r, r)$ satisfies \eqref{eq:ex1} with $\ell=\ell_r$.
We will establish  \eqref{eq:Threshold-Optimisation-1} by showing that 
\begin{equation}\label{eq:showur}
	u_r(x)=V_{1}^{[r,1]}(x) - g_{1,[r,1]}(x),\quad \forall x \in [0,1].
\end{equation}
By construction \eqref{eq:showur} holds for $x \in [r,1]$, and so we restrict attention to the domain $[0,r]$.
By Lemma~\ref{Lemma:Characterisation-1} it is sufficient to show that $u_r$ is the value function of the optimal stopping problem on $[0,r]$ with the obstacle $\vartheta \coloneqq f_{1} - g_{1,[r,1]}$. Therefore using Proposition 3.2 in \cite{Dayanik2003}, it is enough to show that $u_{r}$ is the smallest non-negative concave majorant of $\vartheta$ on $[0,r]$.
  The majorant property on $[\ell_r,r)$ follows from \eqref{eq:ex1}, which gives
\begin{equation}\label{eq:majorantur}
f_{1}(x) - g_{1,[r,1]}(x) \le \left(f_{1}(\ell_{r}) - g_{1,[r,1]}(\ell_{r})\right)\left(\frac{r-x}{r-\ell_{r}}\right),\enskip \forall x \in [0,r],
\end{equation}
and the majorant property at $x=r$ follows from recalling that $f_{1}(r) \le g_{1}(r)$.
For nonnegativity we first recall that the reward functions are null at the boundaries, so taking $x=0$ in \eqref{eq:majorantur} gives $0 \leq f_{1}(\ell_{r}) - g_{1,[r,1]}(\ell_{r}) = u_r(\ell_r)$. Combining this with the fact that $u_r$ equals the obstacle on $[0,\ell_r]$, and hence is concave there, establishes nonnegativity.
For concavity we note that $u_r$ is a straight line on $[\ell_r,r]$, so it remains only to consider any $x_{1} \in [0,\ell_{r})$ and $x_{2} \in (\ell_{r},r]$. Then we have
\begin{align*}
\frac{x_{2} - \ell_{r}}{x_{2}-x_{1}}u_{r}(x_{1}) + \frac{\ell_{r}-x_{1}}{x_{2}-x_{1}}u_{r}(x_{2}) = {} & \frac{x_{2} - \ell_{r}}{x_{2}-x_{1}}[f_{1}(x_{1}) - g_{1,[r,1]}(x_{1})] \nonumber \\
& + \frac{\ell_{r}-x_{1}}{x_{2}-x_{1}}\left(f_{1}(\ell_{r}) - g_{1,[r,1]}(\ell_{r})\right)\left(\frac{r-x_{2}}{r-\ell_{r}}\right) \nonumber \\
\le {} & \frac{x_{2} - \ell_{r}}{x_{2}-x_{1}}\left(f_{1}(\ell_{r}) - g_{1,[r,1]}(\ell_{r})\right)\left(\frac{r-x_{1}}{r-\ell_{r}}\right) \nonumber \\
& + \frac{\ell_{r}-x_{1}}{x_{2}-x_{1}}\left(f_{1}(\ell_{r}) - g_{1,[r,1]}(\ell_{r})\right)\left(\frac{r-x_{2}}{r-\ell_{r}}\right) \nonumber \\
= {} & f_{1}(\ell_{r}) - g_{1,[r,1]}(\ell_{r}) = u_{r}(\ell_{r}),
\end{align*}
where the inequality follows from \eqref{eq:ex1}.
Finally, since $u_r$ equals the obstacle on $[0,\ell_r]$ and is a straight line on $[\ell_r,r]$, it is smaller than any other nonnegative concave majorant on $[0,r]$.
\end{proof}
\begin{remark}\label{Remark:Waiting-Incentive}
Theorem~\ref{Theorem:Existence-Generalised-Nash-Equilibrium-Stopping} remains valid under Condition G1 and the following condition which is weaker than Assumption~\ref{assumption:Preliminary-Assumption-1}: $f_{i} \le g_{i}$ on $\mathcal{S}_{-i}$.
\end{remark}

\section{Stability and uniqueness results}\label{Section:Stability-Uniqueness}
In this section we exploit the above connection to obtain additional novel results for Nash equilibria in the DP. 
We define a concept of stability and provide a sufficient condition under which it holds locally (Corollary \ref{Theorem:Local-Stability-DP}), showing in Theorem \ref{Theorem:Local-Stability-In-Zero-Sum-Game} that this condition always holds in the particular case of zero-sum Dynkin games. By establishing global stability, Theorem \ref{Theorem:Global-Stability-Theorem} provides sufficient conditions for uniqueness of the threshold-type equilibrium of Theorem \ref{Theorem:Existence-Generalised-Nash-Equilibrium-Stopping} among the Markovian strategies. Finally, Theorem \ref{Theorem:Rosen-Uniqueness} transfers another uniqueness result for the GNEP to the DP.



\subsection{Policy iteration}
We will apply the {\em Gauss-Seidel policy iteration} or {\em t\^{a}tonnement process} \cite{Fudenberg1991,Basar1998} to the GNEP. This iteration scheme has previously been used for Dynkin games in \cite{Cattiaux1990} and \cite{KARATZASSudderth2006} and, outside the Markovian framework, in \cite{Hamadene2010}. 
Throughout Section \ref{Section:Stability-Uniqueness}, for convenience we will assume the following Condition G1', rather than G1:

\smallskip
{\bf Condition G1'}.
\label{Assumption:Smooth-Fit}
	Condition G1 holds, with: 
	\begin{enumerate}
\item[1)] $a < b$,
\item[2)] strict convexity and strict concavity, 
\item[3)] $f_{i}, g_{i} \in C^{2}[0,1]$, and
\item[4)] For all $(x,y) \in [0,a] \times [b,1]$ there exists $(\hat{x},\hat{y}) \in (0,a]\times [b,1)$ with $f_{1}(\hat{x}) > g_{1}(y)\cdot\frac{\hat{x}}{y}$ and $f_{2}(\hat{y}) > g_{2}(x)\cdot\frac{1-\hat{y}}{1-x}$.
	\end{enumerate}
\smallskip

We emphasise that these assumptions are for ease of exposition. Parts 1) and 3) imply that the GNEP utility functions are finite and smooth on $\mathcal{S}$, which is convenient for the transfer of results from generalised games. Part 2) says that $f_{1}$ is strictly concave on $[0,a]$ and strictly convex on $[a,1]$, and $f_{2}$ is strictly convex on $[0,b]$ and strictly concave on $[b,1]$. This ensures that iteration (i) below is well defined. 
Part 4) removes the need to consider the points 0 and 1 as candidate thresholds, which is convenient since the principle of smooth fit (used below) may break down there. Recalling the equality \eqref{eq:equivpayoffs}, this is straightforward to see from \eqref{eq:Single-Payoff-OSP-Alt-2-2}, \eqref{eq:deftildem} and \eqref{eq:gexp1}--\eqref{eq:gexp2}. Part 4) similarly ensures that threshold-type equilibria have their thresholds in $(0,1)$ and not at either boundary $0$ or $1$.

Taking $\ell^{(1)} \in [0,a]$, we consider the following two iteration schemes:
\begin{enumerate}
\item[(i)] {\bf In the GNEP}: taking $r^{(1)} = \argmax_{y \in[b,1]}U_{2}(\ell^{(1)},y)$, for $n \geq 2$ define
\begin{equation}\label{eq:GNEPit}
\begin{split}
\ell^{(n)} & = \argmax_{x \in[0,a]}U_{1}(x,r^{(n-1)}), \quad  r^{(n)} = \argmax_{y \in[b,1]}U_{2}(\ell^{(n)},y). \\
\end{split}
\end{equation} 
\item[(ii)]  {\bf In the DP}:  taking $A_{1} = [0,\ell^{(1)}]$, for $n \geq 1$ define
	\begin{equation}\label{eq:Iterative-VF-Stopping}
	\begin{split} 
	(i)&\quad V_{2n}(x) = \sup\limits_{\tau}\mathds{E}^{x}\bigl[
	f_{2}(X_{\tau})\mathds{1}_{\{\tau < D_{A_{2n-1}}\}} + g_{2}(X_{D_{A_{2n-1}}})\mathds{1}_{\{\tau \ge D_{A_{2n-1}}\}}
	\bigr], \\
	(ii)&\quad A_{2n} = \{x \in [0,1] \setminus A_{2n-1} \colon  V_{2n}(x) = f_{2}(x)\}, \\
	(iii)&\quad V_{2n+1}(x) = \sup\limits_{\tau}\mathds{E}^{x}\bigl[
	f_{1}(X_{\tau})\mathds{1}_{\{\tau < D_{A_{2n}}\}} + g_{1}(X_{D_{A_{2n}}})\mathds{1}_{\{\tau \ge D_{A_{2n}}\}}
	\bigr], \\
	(iv)&\quad A_{2n+1} = \{x \in [0,1] \setminus A_{2n} \colon  V_{2n+1}(x) = f_{1}(x)\}.
	\end{split}
	\end{equation}
\end{enumerate}

We will call a solution $s^{*}=(\ell^*,r^*)$ to the GNEP~\eqref{eq:Auxiliary-GNEP} {\em globally stable} if for any $\ell^{(1)} \in [0,a]$ the iteration \eqref{eq:GNEPit} satisfies $\ell^{(n)} \to \ell^*$ and $r^{(n)} \to r^*$, and
{\em locally stable} if this convergence holds only for $\ell^{(1)}$ in a neighbourhood of $\ell^*$. 
Similarly we call a threshold-type solution $s'=(D_{[0,\ell']},D_{[r',1]})$ to the DP~\eqref{eq:NEP-Pure-Strategies} globally stable if for any $\ell^{(1)} \in [0,a]$ the iteration \eqref{eq:Iterative-VF-Stopping} satisfies
 		\begin{equation*}
 		\begin{split}
 		\liminf_{n \to \infty}A_{2n-1} & = \limsup_{n \to \infty}A_{2n-1} = [0,\ell'],\\
 		\liminf_{n \to \infty}A_{2n} & = \limsup_{n \to \infty}A_{2n} = [r',1],
		\end{split}
 		\end{equation*}
and locally stable if convergence holds only for $\ell^{(1)}$ in a neighbourhood of $\ell'$.

\subsection{Local stability}

We will appeal to the following local stability result for the GNEP:
\begin{proposition}[Theorem~1.2.3, \cite{Krasnoselskii1972}]\label{Proposition:Contraction-In-Neighbourhood-Of-Fixed-Point}
	Suppose that Condition G1' holds and that $(\ell_{*},r_{*}) \in (0,a) \times (b,1)$ is a solution to the GNEP. For $w \in \mathcal{S}_{1}$ set 
\begin{equation}\label{eq:Best-Responses}
	\begin{split}
	\bar{y} & = \bar{y}(w) = \argmax_{y \in \mathcal{S}_{2}}U_{2}(w,y), \\
	\bar{x} & = \bar{x}(w) = \argmax_{x \in \mathcal{S}_{1}}U_{1}(x,\bar{y}(w)),
	\end{split}
\end{equation}
and
\[
T(w,\bar{x},\bar{y}) \coloneqq \frac{\partial_{xy}U_{1}(\bar{x},\bar{y})}{\partial_{xx}U_{1}(\bar{x},\bar{y})} \frac{\partial_{xy}U_{2}(w,\bar{y})}{\partial_{yy}U_{2}(w,\bar{y})}.
\]	
	If it is true that
	\begin{equation}\label{eq:Local-Stability-Condition}
	\rho_{0} = |T(\ell_{*},\ell_{*},r_{*})| < 1,
	\end{equation}
	then there exists $\delta > 0$ such that $\forall \, \ell^{(1)} \in [0,a]$ satisfying $|\ell^{(1)} - \ell_{*}| < \delta$, the sequence $\{\ell^{(n)}\}_{n \ge 1}$ in 
	\eqref{eq:GNEPit} converges to $\ell_{*}$. The convergence is exponential: for any $\varepsilon > 0$ there exists a positive constant $c(\ell^{(1)};\varepsilon)$ such that
	\begin{equation}\label{Proposition:Statement-Contraction-In-Neighbourhood-Of-Fixed-Point}
	|\ell^{(n)} - \ell_{*}| \le c(\ell^{(1)};\varepsilon)(\rho_{0} + \varepsilon)^{n}.
	\end{equation}
\end{proposition}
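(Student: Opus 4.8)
The plan is to reduce the two-step scheme \eqref{eq:GNEPit} to a one-dimensional fixed-point iteration $\ell^{(n)} = \Phi(\ell^{(n-1)})$ and then to verify the single hypothesis of the cited result \cite{Krasnoselskii1972}: that $\Phi$ is continuously differentiable near $\ell_{*}$ with $|\Phi'(\ell_{*})| = \rho_{0} < 1$. Writing the best responses $\bar{y}$ and $\bar{x}$ as in \eqref{eq:Best-Responses}, the iteration updates $r^{(n)} = \bar{y}(\ell^{(n)})$ and $\ell^{(n)} = \bar{x}(\ell^{(n-1)})$, so that $\Phi := \bar{x}$ is the natural candidate; moreover $\ell_{*}$ is a fixed point of $\Phi$, since $(\ell_{*},r_{*})$ solves the GNEP and hence $\bar{y}(\ell_{*}) = r_{*}$ and $\bar{x}(\ell_{*}) = \ell_{*}$.

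First I would establish that $\bar{y}$ and $\bar{x}$ are well-defined, interior-valued and $C^{1}$ on a neighbourhood of $\ell_{*}$. Condition G1' supplies exactly what is needed: parts 2) and 3) make $U_{1}(\cdot,y)$ and $U_{2}(x,\cdot)$ strictly quasi-concave and $C^{2}$, so the maximisers are unique, while part 4) forces them into the open intervals $(0,a)$ and $(b,1)$, away from the boundary points at which smooth fit may break down. The maximisers are therefore characterised by the interior first-order conditions $\partial_{y}U_{2}(w,\bar{y}(w)) = 0$ and $\partial_{x}U_{1}(\bar{x}(w),\bar{y}(w)) = 0$, and the nondegeneracy $\partial_{yy}U_{2} \neq 0$, $\partial_{xx}U_{1} \neq 0$ at $(\ell_{*},r_{*})$ — which is already implicit in the definition of $T$ — allows the implicit function theorem to deliver the required $C^{1}$ regularity.

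Next I would compute $\Phi'(\ell_{*})$ by implicit differentiation of these first-order conditions. Differentiating $\partial_{y}U_{2}(w,\bar{y}(w)) = 0$ gives $\bar{y}'(w) = -\partial_{xy}U_{2}(w,\bar{y})/\partial_{yy}U_{2}(w,\bar{y})$, and differentiating $\partial_{x}U_{1}(\bar{x}(w),\bar{y}(w)) = 0$ gives $\bar{x}'(w) = -\bigl[\partial_{xy}U_{1}(\bar{x},\bar{y})/\partial_{xx}U_{1}(\bar{x},\bar{y})\bigr]\,\bar{y}'(w)$. Composing these two relations cancels the minus signs and yields $\Phi'(w) = \bar{x}'(w) = T(w,\bar{x},\bar{y})$ with $T$ as in the statement; evaluating at $w = \ell_{*}$, where $\bar{x} = \ell_{*}$ and $\bar{y} = r_{*}$, gives $\Phi'(\ell_{*}) = T(\ell_{*},\ell_{*},r_{*})$ and hence $|\Phi'(\ell_{*})| = \rho_{0}$.

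Finally, since $\rho_{0} < 1$ by hypothesis, the fixed point $\ell_{*}$ of the $C^{1}$ map $\Phi$ is locally attracting, and the quantitative local-convergence statement for successive approximations (Theorem~1.2.3 of \cite{Krasnoselskii1972}) furnishes a radius $\delta > 0$ together with the exponential bound \eqref{Proposition:Statement-Contraction-In-Neighbourhood-Of-Fixed-Point} at rate $\rho_{0} + \varepsilon$ for arbitrary $\varepsilon > 0$. I expect the main obstacle to be the first step: rigorously justifying that the best responses are single-valued, interior and $C^{1}$ near the equilibrium, so that the first-order conditions hold and the implicit function theorem applies. Once that regularity is secured the derivative computation is a routine chain-rule calculation, and the convergence conclusion follows immediately from the cited theorem.
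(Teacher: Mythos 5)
Your proposal is correct and matches the paper's approach: the paper states this proposition as a direct application of Theorem~1.2.3 in \cite{Krasnoselskii1972} without supplying its own proof, and your reduction of the Gauss--Seidel scheme to the one-dimensional successive approximation $\ell^{(n)} = \bar{x}(\ell^{(n-1)})$, with $\bar{x}'(\ell_{*}) = T(\ell_{*},\ell_{*},r_{*})$ obtained by implicit differentiation of the first-order (smooth-fit) conditions, is exactly the verification needed to invoke that theorem (and coincides with the derivative formula the paper itself uses in the proof of Theorem~\ref{Theorem:Local-Stability-In-Zero-Sum-Game}). The regularity points you flag --- uniqueness, interiority and $C^{1}$ smoothness of the best responses near $(\ell_{*},r_{*})$ --- are indeed supplied by Condition G1' (via Lemma~\ref{Lemma:Quasi-Concavity} and part 4)) together with the assumed interiority of the equilibrium.
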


Our next result translates this into a local stability result for the DP. 


\begin{corollary}\label{Theorem:Local-Stability-DP}
	Suppose Condition G1' holds and that $(D_{[0,\ell_{*}]},D_{[r_{*},1]})$ is a solution to the DP such that $(\ell_{*},r_{*}) \in (0,a) \times (b,1)$ and \eqref{eq:Local-Stability-Condition} holds. Then 
		the equilibrium $(D_{[0,\ell_{*}]},D_{[r_{*},1]})$ in the DP is locally stable.
\end{corollary}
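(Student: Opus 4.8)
The plan is to show that the two iteration schemes \eqref{eq:GNEPit} and \eqref{eq:Iterative-VF-Stopping}, started from the common datum $\ell^{(1)}$, are in exact correspondence: the stopping sets produced by the DP iteration are precisely the threshold intervals determined by the GNEP iterates, namely $A_{2n-1}=[0,\ell^{(n)}]$ and $A_{2n}=[r^{(n)},1]$ for every $n \ge 1$. Local stability of the DP equilibrium then follows directly from the scalar convergence supplied by Proposition~\ref{Proposition:Contraction-In-Neighbourhood-Of-Fixed-Point}.

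First I would transfer the hypotheses to the GNEP. By the two equivalences in Theorem~\ref{Theorem:Necessary-and-Sufficiency-Optimality}, the assumption that $(D_{[0,\ell_*]},D_{[r_*,1]})$ solves the DP means exactly that $\ell_*$ satisfies \eqref{eq:ex1} against $r_*$ and $r_*$ satisfies \eqref{eq:ex2} against $\ell_*$; that is, $(\ell_*,r_*)$ solves the GNEP \eqref{eq:Auxiliary-GNEP}. Since $(\ell_*,r_*)\in(0,a)\times(b,1)$ and \eqref{eq:Local-Stability-Condition} holds under Condition~G1', Proposition~\ref{Proposition:Contraction-In-Neighbourhood-Of-Fixed-Point} furnishes $\delta>0$ such that $\ell^{(n)}\to\ell_*$ whenever $|\ell^{(1)}-\ell_*|<\delta$. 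Because $U_2(\ell,\cdot)$ is strictly quasi-concave (Lemma~\ref{Lemma:Quasi-Concavity} together with part~2 of Condition~G1'), the best-response map $\bar y$ of \eqref{eq:Best-Responses} is single-valued and continuous, and $r_*=\bar y(\ell_*)$; hence $r^{(n)}=\bar y(\ell^{(n)})\to r_*$ as well.

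The central step, proved by induction on $n$, is the identification $A_{2n-1}=[0,\ell^{(n)}]$ and $A_{2n}=[r^{(n)},1]$. The base case is the choice $A_1=[0,\ell^{(1)}]$. For the inductive step, assume $A_{2n-1}=[0,\ell^{(n)}]$. Then $V_{2n}$ in \eqref{eq:Iterative-VF-Stopping} is the player-2 best-response value function $V_2^{[0,\ell^{(n)}]}$ (the $\bar M$- and $M$-payoff versions coincide by Lemma~\ref{Lemma:Characterisation-1}). The iterate $r^{(n)}=\argmax_{y\in[b,1]}U_2(\ell^{(n)},y)$ satisfies \eqref{eq:ex2}, where one passes from the maximum over $[b,1]$ to the maximum over $(\ell^{(n)},1]$ using \eqref{eq:ext2}; so Theorem~\ref{Theorem:Necessary-and-Sufficiency-Optimality} gives $V_{2n}(x)=M^{x}_2(D_{[0,\ell^{(n)}]},D_{[r^{(n)},1]})$ for all $x$. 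It remains to verify that $A_{2n}=\{x\in(\ell^{(n)},1]\colon V_{2n}(x)=f_2(x)\}$ equals exactly $[r^{(n)},1]$, with no spurious stopping points in $(\ell^{(n)},r^{(n)})$; I expect this to be the main obstacle. Using Remark~\ref{Remark:Necessary-Condition-For-Stopping} together with the explicit description from the necessity part of Theorem~\ref{Theorem:Necessary-and-Sufficiency-Optimality} (the reduced value function is affine and lies strictly above the obstacle on the continuation interval), the strict concavity and convexity in part~2 of Condition~G1' force the continuation region to be open, yielding $A_{2n}=[r^{(n)},1]$. The passage from $A_{2n}$ to $A_{2n+1}=[0,\ell^{(n+1)}]$ is symmetric, invoking the player-1 equivalence in Theorem~\ref{Theorem:Necessary-and-Sufficiency-Optimality} and \eqref{eq:ext1}, and single-valuedness of $\bar x$.

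Finally I would translate the scalar convergence into the required set convergence. Since $\ell^{(n)}\to\ell_*$, each $x<\ell_*$ lies in $A_{2n-1}=[0,\ell^{(n)}]$ for all large $n$ and each $x>\ell_*$ lies outside $A_{2n-1}$ for all large $n$, so $[0,\ell_*)\subseteq\liminf_n A_{2n-1}\subseteq\limsup_n A_{2n-1}\subseteq[0,\ell_*]$; the two set limits thus agree with $[0,\ell_*]$ apart from the single endpoint $\ell_*$, which is immaterial since $\ell_*$ is regular for $X$ and therefore does not affect the first entrance time $D_{[0,\ell_*]}$. The analogous argument with $r^{(n)}\to r_*$ gives convergence of $A_{2n}$ to $[r_*,1]$. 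This yields the convergence in the definition of DP stability for all $\ell^{(1)}$ with $|\ell^{(1)}-\ell_*|<\delta$, establishing local stability of $(D_{[0,\ell_*]},D_{[r_*,1]})$.
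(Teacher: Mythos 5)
Your proposal follows the paper's proof essentially step for step: transfer the DP equilibrium to a GNEP solution via Theorem~\ref{Theorem:Necessary-and-Sufficiency-Optimality}, invoke Proposition~\ref{Proposition:Contraction-In-Neighbourhood-Of-Fixed-Point} for convergence of the GNEP iterates, and then identify, by induction, the stopping sets of the DP iteration \eqref{eq:Iterative-VF-Stopping} with the threshold intervals $[0,\ell^{(n)}]$ and $[r^{(n)},1]$. Your closing discussion of the endpoint in the $\liminf$/$\limsup$ set convergence (resolved by regularity of points for Brownian motion) is a careful touch that the paper passes over silently.

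The one place where your write-up is not yet a proof is exactly the step you flag as the main obstacle: excluding spurious stopping points in $(\ell^{(n)},r^{(n)})$. Your cited justification --- that the necessity part of Theorem~\ref{Theorem:Necessary-and-Sufficiency-Optimality} shows the reduced value function ``lies strictly above the obstacle on the continuation interval'' --- is not what that theorem provides: it exhibits the value function only as the \emph{smallest non-negative concave majorant} of the obstacle, a non-strict domination. Strictness must be extracted from part~2) of Condition~G1', and the paper does this in two pieces, exploiting the equivalent formulations supplied by Lemma~\ref{Lemma:Characterisation-1} and Remark~\ref{Remark:Necessary-Condition-For-Stopping}: on $[\ell^{(n)},b]$, where the obstacle $f_2$ in the form \eqref{eq:Single-Payoff-OSP} is strictly convex, Dynkin's formula applied around any putative stopping point contradicts superharmonicity of the value function; on $[b,r^{(n)})$, where the reduced obstacle $f_2-g_{2,A_{2n-1}}$ in the form \eqref{eq:Single-Payoff-OSP-Alt-2-2} is strictly concave, smooth fit at $r^{(n)}$ makes the affine piece of the reduced value a tangent line, which lies strictly above a strictly concave function away from the point of tangency. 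Since you already name the correct ingredients (strict convexity/concavity under G1'), this is an incompletely executed step rather than a wrong route; filling it in as above recovers the paper's proof.
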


\begin{proof}

We have from Theorem \ref{Theorem:Necessary-and-Sufficiency-Optimality} that $(\ell_{*},r_{*}) \in (0,a) \times (b,1)$ is a solution to the GNEP. Applying Proposition \ref{Proposition:Contraction-In-Neighbourhood-Of-Fixed-Point}, take 
$\ell^{(1)} \in [0,a]$ satisfying $|\ell^{(1)} - \ell_{*}| < \delta$ and consider the  iteration given by \eqref{eq:GNEPit}. This yields sequences $(\ell^{(n)}) \to \ell_*$ and $(r^{(n)}) \to r_*$, taking values respectively in $(0,a)$ and $(b,1)$. Lemma~\ref{Lemma:Characterisation-1} and Theorem \ref{Theorem:Necessary-and-Sufficiency-Optimality} then show that the stopping time $D_{[r^{(n)},1]}$ is optimal in \eqref{eq:Iterative-VF-Stopping}-i). Similarly, the stopping time $D_{[0,\ell^{(n)}]}$ is optimal in \eqref{eq:Iterative-VF-Stopping}-iii).

Next we establish that the stopping region $A_{2}$ is given by $[r^{(1)},1]$. From Remark \ref{Remark:Necessary-Condition-For-Stopping}, we may study the optimal stopping problem \eqref{eq:Iterative-VF-Stopping}-i) in either of its equivalent forms \eqref{eq:Single-Payoff-OSP} or \eqref{eq:Single-Payoff-OSP-Alt-2-2} (taking $f=f_2$, $g=g_{2}$ and $A=A_1=[0,\ell^{(1)}$]). Using \eqref{eq:Single-Payoff-OSP}, it is immediate from the strict convexity of the obstacle $f_2$ on $[\ell^{(1)},b]$ and Dynkin's formula that $A_{2} \cap [\ell^{(1)},b] = \emptyset$. On the other hand, considering problem \eqref{eq:Single-Payoff-OSP-Alt-2-2} it follows from the strict concavity of the obstacle $f_2- g_{2,A_{1}}$ on $[b,1]$ and the smooth fit principle that the obstacle lies strictly below the value function on $[b,r^{(1)})$, establishing that $A_{2}=[r^{(1)},1]$. Arguing similarly for $A_3$ and then proceeding inductively we obtain $A_{2n+1} = [0,\ell^{(n+1)}]$ and $A_{2n+2} = [r^{(n+1)},1]$ for all $n$.
\end{proof}
\begin{remark}\label{rem:noint}
The fact that $A_1$ is an interval plays no role in the above proof, which only uses the inclusion $A_1 \subseteq [0,a]$. 
\end{remark}

{\bf Local stability in the zero-sum DP.}
The link to GNEPs also provides the following result on local stability of equilibria in the zero-sum DP, that is, when $f_{i} = -g_{-i}$, $i \in \{1,2\}$. The result is novel to the best of our knowledge.
\begin{theorem}\label{Theorem:Local-Stability-In-Zero-Sum-Game}
	Under Condition G1' every threshold-type solution of the zero-sum DP is locally stable.


\end{theorem}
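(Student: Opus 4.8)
The goal is to show that the local stability condition \eqref{eq:Local-Stability-Condition} is automatically satisfied in the zero-sum case, so that Corollary \ref{Theorem:Local-Stability-DP} applies. The plan is therefore to compute the quantity $\rho_0 = |T(\ell_*,\ell_*,r_*)|$ explicitly and show that it is strictly less than $1$ whenever the equilibrium thresholds lie in the interior.

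First I would exploit the zero-sum structure at the level of the GNEP utilities. Although the remark on the zero-sum property shows that the DP being zero-sum does not make the GNEP zero-sum, the relations $f_i = -g_{-i}$ and $h_i = -h_{-i}$ induce rigid algebraic links between $U_1$ and $U_2$ through their defining formula \eqref{eq:Auxiliary-GNZSG-Utilities} and the explicit expressions \eqref{eq:gexp1}--\eqref{eq:gexp2} for the reduced functions. Substituting $g_1(y) = -f_2(y)$ and $g_2(x) = -f_1(x)$ into $U_1$ and $U_2$, I would express both utilities in terms of $f_1$ and $f_2$ alone and identify a symmetry relating the mixed and pure second partial derivatives of $U_1$ to those of $U_2$ evaluated at the equilibrium point. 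The key is that, at a solution $(\ell_*,r_*)$, each player's first-order stationarity condition $\partial_x U_1 = 0$ and $\partial_y U_2 = 0$ constrains the numerators, allowing the second derivatives to be simplified considerably.

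Next I would use the first-order conditions to eliminate the leader-reward terms. At the interior maximum $\ell_*$ of $x \mapsto U_1(x,r_*)$ we have $\partial_x U_1(\ell_*,r_*) = 0$, and similarly $\partial_y U_2(\ell_*,r_*) = 0$; this is where Condition G1' part 4) is essential, since it guarantees the thresholds are in $(0,1)$ and so these interior stationarity conditions genuinely hold. These stationarity relations convert the formula for $T$ into a ratio that I expect, after the zero-sum substitutions, to collapse into a single quotient of the form $\partial_{xy}U_1\,\partial_{xy}U_2/(\partial_{xx}U_1\,\partial_{yy}U_2)$ in which the numerator and denominator share common factors. The strict concavity/convexity hypotheses in part 2) guarantee that $\partial_{xx}U_1(\ell_*,r_*) < 0$ and $\partial_{yy}U_2(\ell_*,r_*) < 0$ (these are genuine interior maxima), so the denominators are nonzero and the sign of $T$ is controlled.

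The main obstacle will be the final inequality $\rho_0 < 1$: after the simplifications I anticipate an expression of the form
\[
\rho_0 = \left|\frac{\partial_{xy}U_1(\ell_*,r_*)}{\partial_{xx}U_1(\ell_*,r_*)}\cdot\frac{\partial_{xy}U_2(\ell_*,r_*)}{\partial_{yy}U_2(\ell_*,r_*)}\right|,
\]
and the zero-sum symmetry should force the two factors to be reciprocal-like or to combine into a product that is strictly bounded by $1$ because each cross-partial is strictly dominated by the corresponding pure second partial. Concretely, I would show that the secant-slope structure of $U_i$ makes each ratio $\partial_{xy}U_i/\partial_{\bullet\bullet}U_i$ a contraction factor strictly inside $(-1,1)$, the strictness coming from the strict convexity/concavity in G1' part 2). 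Once $\rho_0 < 1$ is established, \eqref{eq:Local-Stability-Condition} holds at every interior threshold-type equilibrium, and Corollary \ref{Theorem:Local-Stability-DP} immediately yields local stability in the DP, completing the proof.
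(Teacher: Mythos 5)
Your overall skeleton is the same as the paper's: verify the local stability condition \eqref{eq:Local-Stability-Condition} at the equilibrium and then invoke Corollary \ref{Theorem:Local-Stability-DP} (or directly Proposition \ref{Proposition:Contraction-In-Neighbourhood-Of-Fixed-Point}), using the zero-sum substitutions $g_1 = -f_2$, $g_2 = -f_1$ together with the first-order (smooth fit) conditions. However, your final step contains a genuine gap. You propose to conclude $\rho_0 < 1$ by arguing that each ratio $\partial_{xy}U_i/\partial_{xx}U_1$, $\partial_{xy}U_2/\partial_{yy}U_2$ is a ``contraction factor strictly inside $(-1,1)$'' because ``each cross-partial is strictly dominated by the corresponding pure second partial,'' with the strictness coming from Condition G1' part 2). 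No such domination holds: strict convexity/concavity fixes the signs and non-degeneracy of $\partial_{xx}U_1$ and $\partial_{yy}U_2$, but gives no quantitative bound against the mixed partials. Indeed the paper's own ``Local stability only'' construction exhibits reward functions satisfying Condition G1' for which the product
\[
\left(\frac{f_{1}'(\bar{x}) - g_{1}'(\bar{y})}{f_{1}''(\bar{x})(\bar{y}-\bar{x})}\right)\left(\frac{g_{2}'(w) - f_{2}'(\bar{y})}{f_{2}''(\bar{y})(\bar{y}-w)}\right)
\]
is made arbitrarily large in absolute value, so your proposed pointwise bound cannot be a consequence of G1' alone; it must come from the zero-sum structure, and you never identify how.

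The mechanism you are missing is an exact cancellation, not an inequality. Using the appendix formulas and smooth fit, $T$ collapses to the displayed product above (this is \eqref{eq:Global-Stability-Condition-Operator} in the paper), whose factors have numerators $f_1'(\bar x) - g_1'(\bar y)$ and $g_2'(w) - f_2'(\bar y)$. In the zero-sum case, smooth fit for both players at the equilibrium yields the chain
\[
-g_{2}'(\ell_{*}) = f_{1}'(\ell_{*}) = \frac{g_{1}(r_{*})-f_{1}(\ell_{*})}{r_{*} - \ell_{*}}
= \frac{-f_{2}(r_{*}) + g_{2}(\ell_{*})}{r_{*} - \ell_{*}} = -f_{2}'(r_{*}) = g_{1}'(r_{*}),
\]
so that $f_1'(\ell_*) - g_1'(r_*) = 0$ and $g_2'(\ell_*) - f_2'(r_*) = 0$. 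Hence $T(\ell_*,\ell_*,r_*) = 0$, i.e.\ $\rho_0 = 0$, and \eqref{eq:Local-Stability-Condition} holds trivially. Your proposal would discover this only if the ``reciprocal-like'' combination you vaguely anticipate were replaced by this computation; as written, the step establishing $\rho_0 < 1$ would fail.
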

\begin{proof}
Let a threshold-type solution $(D_{[0,\ell_{*}]},D_{[r_{*},1]})$ be given for the DP. We have $V_{1}^{[r_{*},1]} + V_{2}^{[0,\ell_{*}]} = 0$. Using the principle of smooth fit
we get,
	\begin{align*}
	-g_{2}'(\ell_{*}) = f_{1}'(\ell_{*}) & = \frac{g_{1}(r_{*})-f_{1}(\ell_{*})}{r_{*} - \ell_{*}} \\
	& = \frac{[-f_{2}(r_{*}) + g_{2}(\ell_{*})]}{r_{*} - \ell_{*}}  = -f_{2}'(r_{*}) = g_{1}'(r_{*}).
	\end{align*}	
	Using the expressions for $U_{1}$ and $U_{2}$ in \eqref{eq:Auxiliary-GNZSG-Utilities}, the general expressions for the partial derivatives of the utility functions in Appendix~\ref{sec:smooth}, and the smooth fit principle at $(w,\bar{y})$ and $(\bar{x},\bar{y})$,
	one can show that
\begin{equation}\label{eq:Global-Stability-Condition-Operator}
T(w,\bar{x},\bar{y}) = \left(\frac{f_{1}'(\bar{x}) - g_{1}'(\bar{y})}{f_{1}''(\bar{x})(\bar{y}-\bar{x})}\right)\left(\frac{g_{2}'(w) - f_{2}'(\bar{y})}{f_{2}''(\bar{y})(\bar{y}-w)}\right).
\end{equation}
In this zero-sum context we therefore have $T(\ell_{*},\ell_{*},r_{*}) = 0$, and the local stability of the equilibrium point now follows from Proposition~\ref{Proposition:Contraction-In-Neighbourhood-Of-Fixed-Point}.
\end{proof}

\subsection{Global stability and uniqueness}\label{sec:glob}
There is a stronger version of the criterion \eqref{eq:Local-Stability-Condition} that guarantees the iteration scheme to converge irrespective of player 1's initial strategy $\ell^{(1)} \in [0,a]$. Furthermore, the equilibrium strategy $(\ell_{*},r_{*})$ thus obtained is unique. 
\begin{theorem}\label{Theorem:Global-Stability-Theorem}
	Suppose that Condition G1' holds and that the reward functions $f_{i}$ and $g_{i}$, $i = 1,2$, satisfy
	\begin{equation}
		\sup_{w \in \mathcal{S}_{1}}\left|\left(\frac{f_{1}'(\bar{x}) - g_{1}'(\bar{y})}{f_{1}''(\bar{x})(\bar{y}-\bar{x})}\right)\left(\frac{g_{2}'(w) - f_{2}'(\bar{y})}{f_{2}''(\bar{y})(\bar{y}-w)}\right)\right| < 1,\label{eq:Global-Stability-Condition}
	\end{equation}
where $\bar{y} = \bar{y}(w)$ and $\bar{x} = \bar{x}(w)$ are defined by \eqref{eq:Best-Responses}. Then there exists $(\ell_{*},r_{*}) \in \mathcal{S}$ such that $(D_{[0,\ell_{*}]},D_{[r_{*},1]})$ is a solution to the DP. This solution is stable, and is unique in the class of Markovian strategies $(D_{S_1},D_{S_2})$ for closed stopping sets $S_1 \subseteq [0,a]$ and $S_2 \subseteq [b,1]$.
\end{theorem}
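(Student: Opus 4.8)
The plan is to reduce the entire statement to the study of a single scalar best-response map on $[0,a]$ and to show that hypothesis \eqref{eq:Global-Stability-Condition} makes this map a contraction. Following \eqref{eq:Best-Responses}, write $\bar{y}(w)=\argmax_{y\in\mathcal{S}_2}U_2(w,y)$ and $\bar{x}(w)=\argmax_{x\in\mathcal{S}_1}U_1(x,\bar{y}(w))$, and set $\Phi(w):=\bar{x}(w)$. Since $a<b$ (Condition G1', part 1), every pair in $\mathcal{S}_1\times\mathcal{S}_2$ is feasible, so the GNEP is a classical game and the iteration \eqref{eq:GNEPit} is exactly $\ell^{(n)}=\Phi^{\,n-1}(\ell^{(1)})$ with $r^{(n)}=\bar{y}(\ell^{(n)})$. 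Parts 2)--4) of Condition G1' guarantee that $U_1(\cdot,y)$ and $U_2(w,\cdot)$ are $C^1$ and strictly quasi-concave with interior maximisers, so $\bar{x},\bar{y}$ are single-valued and, by the implicit function theorem applied to the first-order conditions, continuously differentiable, with $\Phi$ mapping $[0,a]$ into itself.

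The key computation is to differentiate $\Phi$ using that, by \eqref{eq:Best-Responses}, $\bar{x}(w)$ depends on $w$ only through $\bar{y}(w)$. The first-order conditions $\partial_y U_2(w,\bar{y})=0$ and $\partial_x U_1(\bar{x},\bar{y})=0$ give $\bar{y}'(w)=-\partial_{xy}U_2/\partial_{yy}U_2$ and $d\bar{x}/dy=-\partial_{xy}U_1/\partial_{xx}U_1$, so that by the chain rule $\Phi'(w)=T(w,\bar{x}(w),\bar{y}(w))$ with $T$ as in Proposition~\ref{Proposition:Contraction-In-Neighbourhood-Of-Fixed-Point}. By \eqref{eq:Global-Stability-Condition-Operator} this equals the bracketed expression in \eqref{eq:Global-Stability-Condition}, so hypothesis \eqref{eq:Global-Stability-Condition} reads $\sup_{w}|\Phi'(w)|<1$. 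By the mean value theorem $\Phi$ is then a contraction of $[0,a]$, and the Banach fixed point theorem yields a unique $\ell_*$ with $\Phi(\ell_*)=\ell_*$ together with $\Phi^{\,n}(\ell^{(1)})\to\ell_*$ for every $\ell^{(1)}\in[0,a]$. Setting $r_*=\bar{y}(\ell_*)$, the pair $(\ell_*,r_*)$ is the unique Nash equilibrium of the GNEP, and continuity of $\bar{y}$ gives $r^{(n)}\to r_*$.

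The DP statements then follow by transfer. That $(\ell_*,r_*)$ is a GNEP equilibrium means it satisfies \eqref{eq:ex1}--\eqref{eq:ex2}, so by Theorem~\ref{Theorem:Necessary-and-Sufficiency-Optimality} the pair $(D_{[0,\ell_*]},D_{[r_*,1]})$ solves the DP. For global stability I would repeat the argument of Corollary~\ref{Theorem:Local-Stability-DP}: it identifies the policy-iteration sets as $A_{2n-1}=[0,\ell^{(n)}]$ and $A_{2n}=[r^{(n)},1]$, and this identification uses only the inclusion $A_1\subseteq[0,a]$ (Remark~\ref{rem:noint}) together with strict convexity/concavity and smooth fit, none of which depend on the starting point. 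Since $\ell^{(n)}\to\ell_*$ and $r^{(n)}\to r_*$ for every $\ell^{(1)}\in[0,a]$, the required $\liminf/\limsup$ convergence of the $A_k$ holds and the equilibrium is globally stable.

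It remains to prove uniqueness in the Markovian class, which I expect to be the main obstacle. The plan is to show first that any Markovian equilibrium $(D_{S_1},D_{S_2})$ with closed $S_1\subseteq[0,a]$, $S_2\subseteq[b,1]$ is necessarily of threshold type. In equilibrium $D_{S_2}$ is an optimal response to player~1 stopping on $S_1\subseteq[0,a]$; by the geometric analysis in Corollary~\ref{Theorem:Local-Stability-DP} (valid for non-interval $A_1$ by Remark~\ref{rem:noint}) the reduced obstacle $f_2-g_{2,S_1}$ is strictly convex on $[0,b]$ and strictly concave on $[b,1]$, so its smallest concave majorant coincides with it exactly on an interval $[r,1]$ and lies strictly above it elsewhere. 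Strict concavity on $[r,1]$ forces immediate stopping to be strictly optimal there and strictly suboptimal on the complement, whence $S_2=[r,1]$; symmetrically $S_1=[0,\ell]$. Thus every equilibrium in the class corresponds through Theorem~\ref{Theorem:Necessary-and-Sufficiency-Optimality} to a Nash equilibrium of the GNEP, which is a fixed point of the contraction $\Phi$ and hence unique. The delicate points I would treat carefully are (i) the regularity and interiority of the best-response maps needed to differentiate $\Phi$, and (ii) the step upgrading ``$D_{S_2}$ optimal'' to ``$S_2$ equals the canonical stopping region'', which relies on strict concavity making the stopped value process a strict supermartingale off the threshold.
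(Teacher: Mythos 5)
Your proposal is correct and takes essentially the same route as the paper's proof: the paper likewise obtains a unique, globally stable GNEP solution from a contraction argument (which it delegates to the cited results of Li, Ba\c{s}ar--Olsder and Krasnosel'skii, and which you spell out via the implicit function theorem and the identity $\Phi'(w)=T(w,\bar{x},\bar{y})$ from \eqref{eq:Global-Stability-Condition-Operator}), transfers it to the DP through Theorem~\ref{Theorem:Necessary-and-Sufficiency-Optimality} and the policy-iteration identification of Corollary~\ref{Theorem:Local-Stability-DP} with Remark~\ref{rem:noint}, and proves Markovian uniqueness by showing $S_{2} \subseteq A_{2}$ by optimality and $A_{2} \setminus S_{2} = \emptyset$ by a strict-concavity (``small ball'') argument. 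The delicate point (ii) that you flag is exactly the step the paper settles with that small-ball argument, so nothing essential is missing.
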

\begin{proof}
	Under Condition G1'
	every solution $(\ell_{*},r_{*})$ to the GNEP lies in $(0,a) \times (b,1)$. A standard contraction argument then shows that under \eqref{eq:Global-Stability-Condition}, there exists a unique solution $(\ell_{*},r_{*})$ to the GNEP and, further, that it is globally stable (see for example Theorem~1 in \cite{Li1987} or Proposition~4.1 in \cite{Basar1998}; see also Theorem~1.2.1 in \cite{Krasnoselskii1972}).

Thus from Theorem \ref{Theorem:Necessary-and-Sufficiency-Optimality}, $(D_{[0,\ell_{*}]},D_{[r_{*},1]})$ is a solution to the DP. The fact that it is stable follows from the corresponding property in the GNEP.
Suppose that the DP has another solution $(D_{[0,\ell]},D_{[r,1]})$ with $\ell < r$. Again arguing as in Corollary~\ref{Theorem:Local-Stability-DP}, the reward function geometry 
gives $\ell \in [0,a]$ and $r \in [b,1]$. Therefore $(\ell,r)$ is a solution to the GNEP 
and we have $\ell = \ell_{*}$ and $r = r_{*}$ by uniqueness. 

Suppose that $(D_{S_1},D_{S_2})$ is an equilibrium with closed stopping sets $S_1 \subseteq [0,a]$ and $S_2 \subseteq [b,1]$. Recalling Remark \ref{rem:noint}, now consider applying the iteration (ii) above, modified by choosing $A_1=S_1$, to obtain $A_2=[r,1]$, say. Then by optimality $S_2 \subseteq A_2$. 
Finally it is not difficult to see from a standard `small ball' argument that the strict concavity of $f_2$ on $[b,1]$ implies that $A_2 \setminus S_2 = \emptyset$. We conclude similarly that $A_1$ has the form $[0,\ell]$, completing the proof.
\end{proof}

\begin{remark} The sets $S_1$ and $S_2$ in Theorem \ref{Theorem:Global-Stability-Theorem} are closed in order to avoid trivialities, since every point is regular for standard Brownian motion. Note that the theorem establishes uniqueness among the Markovian strategies, rather than uniqueness among the subset of threshold-type strategies (cf. \cite{DeAngelis2015}). 
\end{remark}

\subsection{Examples}
We begin this section by constructing an example DP satisfying the global stability condition \eqref{eq:Global-Stability-Condition}. This example is then used to derive a second DP for which local stability, but not global stability, holds. Finally, we discuss local stability of the zero-sum DP.

{\bf Global stability.} Suppose that $b - a > \frac{1}{2}$ and that $F_i$, $G_i$ are functions satisfying Condition G1' and furthermore,
	\begin{equation*}
	F_1(x) = x(\tfrac{a}{2}-x), \quad x \in [0,\tfrac{a}{2}].
	\end{equation*}
	It follows from Condition G1' that $F_1$ is negative on $[\tfrac{a}{2},1]$. Therefore, for every $w \in \mathcal{S}_{1}$ the `best response' $\bar x(w)$ to $\bar y(w)$ takes values in $[0,\tfrac{a}{2}]$, where we have the inequality
	\[
	\left|\frac{F_1'(x)}{F_1''(x)}\right| = \left|x - \tfrac{a}{4}\right| \leq \tfrac{1}{4}.
	\]
Since $G_1'$ is bounded on $[0,a]$ by Condition G1', and recalling that $\bar y \in [b,1]$ by definition, for a sufficiently large constant $R_1>0$ we have: 
	\[
	\left|\frac{F_{1}'(\bar{x}) - \frac{1}{R_{1}} G_{1}'(\bar{y})}{F_{1}''(\bar{x})(\bar{y}-\bar{x})}\right| \le 2 \cdot \frac{1}{4} \cdot \frac{1}{b-a} < 1.
	\]
	Therefore if player 1's reward functions in the DP are $f_1 = F_1$ and $g_1=\frac{1}{R_1}G_1$ (which clearly satisfy Condition G1'), then the left hand parenthesis in \eqref{eq:Global-Stability-Condition} has absolute value less than 1. Similarly if we take $F_2(x)=(x-\frac{b+1}{2})(1-x)$ for all $x \in [\frac{b+1}{2},1]$ and let player 2's reward functions be $f_2 = F_2$ and $g_2=\frac{1}{R_2}G_2$ for a sufficiently large constant $R_2$, the right hand parenthesis in \eqref{eq:Global-Stability-Condition} has absolute value less than 1 and so the global stability condition \eqref{eq:Global-Stability-Condition} holds.
%

	\begin{remark}
		Under Assumption~\ref{assumption:Preliminary-Assumption-1} the reward functions in the DP must satisfy $f_{i} \le g_{i}$ on $[0,1]$. Given the choice of $g_{i}$ in the example above, $f_{i} \le g_{i}$ implies that the rather strong condition $G_{i} \ge R_{i}F_{i}$ on $[0,1]$ must hold. Although Remark~\ref{Remark:Waiting-Incentive} shows that $G_{i} \ge R_{i}F_{i}$ is only needed on $\mathcal{S}_{-i}$, there are alternative choices for $g_{i}$ that satisfy Assumption~\ref{assumption:Preliminary-Assumption-1} and lead to a conclusion similar to that of the example above. More specifically, in the case $i = 1$, take any $G_{1} \ge \max(0,F_{1})$ which is in $C^{2}[0,1]$ and define $g_{1}$ to be a suitable restriction of $G_{1}$ to $[0,\frac{a}{2}]$ such that $g_{1}$ is in $C^{2}[0,1]$, and on $[b,1]$ $g_{1}$ is nonnegative and $g_{1}'$ is sufficiently small. For example, let $x \mapsto \eta(x)$ be the standard mollifier, $$\eta(x) = \begin{cases}
		C\exp\bigl(\tfrac{1}{x^{2}-1}\bigr),& |x| < 1 \\
		0,& |x| \ge 1
		\end{cases}$$ where $C > 0$ is chosen so that $\int_{\mathbb{R}}\eta(x){d}x=1$. For $\epsilon > 0$ define $\eta_{\epsilon}(x) \coloneqq \tfrac{1}{\epsilon}\eta(\tfrac{x}{\epsilon})$, $H_{\epsilon}(x) = \int_{-\infty}^{x}\eta_{\epsilon}(y){d}y$ and set $g_{1}(x;\epsilon) = H_{\epsilon}(\tfrac{a}{2}-x+\epsilon)G_{1}(x)$. For $x \le \tfrac{a}{2}$ we have $g_{1}(x;\epsilon) = G_{1}(x) \ge F_{1}(x) = f_{1}(x)$. For $x \ge \frac{a}{2} + 2\epsilon$ we have $g_{1}(x;\epsilon) = 0 \ge F_{1}(x) = f_{1}(x)$ and, for an appropriate choice of $\epsilon$, $g_{1}'(x) = 0$ on $[b,1]$.
	\end{remark}


{\bf Local stability only.} Global stability implies that the local stability condition \eqref{eq:Local-Stability-Condition} holds at the unique Nash equilibrium $(\ell_*,r_*)$ in the DP we have just constructed. Taking the same reward functions in the DP, suppose now that player 1's strategy is $w_0 \in \mathcal S_1$ and that player 2's best response is $r_*$. Then from the smooth fit condition for player 2, the point $(w_0,g_2(w_0))$ must lie on the straight line tangent to $f_2$ at $(r_*, f_2(r_*))$. We may therefore conclude that if $g_2$ is not linear on $\mathcal S_1$, then there exists a strategy $w_0 \in \mathcal S_1 \setminus \{\ell_*\}$ for player 1 to which player 2's best response is $y_0 \in \mathcal S_2 \setminus \{r_*\}$. It is also not difficult to see that $y_0 \in (\frac{b+1}{2},1)$, and hence smooth fit holds at $y_0$, provided that $g_2$ is bounded above by the tangent to $f_2$ at $(1,f_2(1))$. 

Next we remark that the function $f_2$ may be arbitrarily `flattened' in a small neighbourhood of $ y_0$ without violating Condition G1'. That is, let $N_0$ be an open neighbourhood of $ y_0$ whose closure does not contain $r_*$ and let $\epsilon \in (f_2''( y_0),0)$. Then $f_2$ may be modified on $N_0$ to produce a new function $\tilde f_2$ with 
\begin{align*}
\tilde f_2(y) &= f_2(y),  \qquad y \in \{ y_0\} \cup N_0^c, \\
\tilde f_2'( y_0) &= f_2'( y_0), \\
\tilde f_2''( y_0) &= \epsilon, 
\end{align*}
and such that Condition G1' holds for the reward functions $f_1$, $\tilde f_2$ and $g_i$. By construction, the smooth fit condition 
continues to hold at $y_0$ when $f_2$ is replaced by $\tilde f_2$, so that $y_0$ remains player 2's best response to $w_0$. In this way the right hand multiplicand in \eqref{eq:Global-Stability-Condition} may be made arbitrarily large in absolute value when $w=w_0$ (provided the numerator is non-zero, a mild condition). We thus obtain a DP satisfying Condition G1' which has local, but not global, stability.

\subsection{Uniqueness of Nash equilibria}\label{sec:glob2}
We close this section with a final result on uniqueness of equilibria in the DP by applying a well known condition in \cite{Rosen1965} for uniqueness of a solution to the GNEP.

\begin{theorem}\label{Theorem:Rosen-Uniqueness}
	Suppose that Condition G1' holds,
	\begin{align}\label{eq:Concavity-Utility-P1}
	f_{1}''(x) & \le -2\frac{f_{1}(x) + f_{1}'(x)(y-x) - g_{1}(y)}{(y-x)^{2}}, \quad \forall (x,y) \in (0,a) \times [b,1],\\
	f_{2}''(y) & \le -2\frac{f_{2}(y) - f_{2}'(y)(y-x) - g_{2}(x)}{(y-x)^{2}}, \quad \forall (x,y) \in [0,a] \times (b,1),\label{eq:Concavity-Utility-P2}
	\end{align}
	and $\exists\, (r_{1},r_{2}) \in [0,\infty) \times [0,\infty)$ such that $\forall (x,y) \in \mathcal{S}$,
	\begin{equation}\label{eq:Diagonal-Strict-Concavity-Condition-Explicit-1}
	4r_{1}r_{2}H_{1}(x,y)H_{2}(x,y) - \bigl(r_{1}H_{3}(x,y) + r_{2}H_{4}(x,y)\bigr)^{2} > 0,
	\end{equation}
	where $H_{1}$,\ldots,$H_{4}$ are given by,
	\begin{equation}\label{eq:Diagonal-Strict-Concavity-Condition-Explicit-2}
	\begin{split}
	H_{1}(x,y) & = f_{1}''(x)(y-x)^{2} + 2\bigl[f_{1}(x) + f_{1}'(x)(y-x) - g_{1}(y)\bigr] \\
	H_{2}(x,y) & = f_{2}''(y)(y-x)^{2} + 2\bigl[f_{2}(y) - f_{2}'(y)(y-x) - g_{2}(x)\bigr] \\
	H_{3}(x,y) & = 2\bigl[g_{1}(y) - f_{1}(x)\bigr] - (f_{1}'(x)+g_{1}'(y))(y-x) \\
	H_{4}(x,y) & = 2\bigl[g_{2}(x) - f_{2}(y)\bigr] + (g_{2}'(x) + f_{2}'(y))(y-x).
	\end{split}
	\end{equation}
	Then there exists a unique solution $(\ell_{*},r_{*}) \in \mathcal{S}$ to the GNEP~\eqref{eq:Auxiliary-GNEP}, and therefore $(D_{[0,\ell_{*}]},D_{[r_{*},1]})$ is the unique solution to the DP in the class of Markovian strategies $(D_{S_1},D_{S_2})$ for closed stopping sets $S_1 \subseteq [0,a]$ and $S_2 \subseteq [b,1]$.
\end{theorem}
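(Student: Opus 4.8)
The plan is to use the established equivalence between the DP and the GNEP (Theorem~\ref{Theorem:Necessary-and-Sufficiency-Optimality}) to reduce the uniqueness problem in the Dynkin game to a uniqueness problem in the generalised game, and then to invoke Rosen's criterion for the latter. The heart of the matter is therefore to verify that the hypotheses \eqref{eq:Concavity-Utility-P1}--\eqref{eq:Diagonal-Strict-Concavity-Condition-Explicit-2} are precisely the conditions guaranteeing existence and uniqueness of a Nash equilibrium for the GNEP \eqref{eq:Auxiliary-GNEP} under Rosen's framework \cite{Rosen1965}, and then to transfer this uniqueness back to the DP exactly as in the final paragraphs of Theorem~\ref{Theorem:Global-Stability-Theorem}.

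First I would show existence and the diagonal strict concavity structure. Recalling the utility functions \eqref{eq:Auxiliary-GNZSG-Utilities}, I would compute the second partial derivatives $\partial_{xx}U_1$ and $\partial_{yy}U_2$ on $\mathcal{S}$ using the explicit expressions in Appendix~\ref{sec:smooth} together with the formulae \eqref{eq:gexp1}--\eqref{eq:gexp2} for $g_{1,[y,1]}$ and $g_{2,[0,x]}$. The point is that $\partial_{xx}U_1$ has the same sign as $H_1(x,y)$ and $\partial_{yy}U_2$ has the same sign as $H_2(x,y)$ (up to a positive factor involving powers of $y-x$), so that the inequalities \eqref{eq:Concavity-Utility-P1}--\eqref{eq:Concavity-Utility-P2} are equivalent to $H_1 \le 0$ and $H_2 \le 0$, i.e. to the concavity of $U_1(\cdot,y)$ on $\mathcal{S}_1$ and of $U_2(x,\cdot)$ on $\mathcal{S}_2$. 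Together with the continuity granted by Condition~G1' this yields Condition~U, so Lemma~\ref{Lemma:Existence-Solution-GNEP} furnishes a solution $(\ell_*,r_*) \in \mathcal{S}$ with $\ell_* < r_*$.

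Next I would establish uniqueness via \cite{Rosen1965}. Rosen's sufficient condition for a unique equilibrium is the \emph{diagonal strict concavity} of the weighted sum $r_1 U_1 + r_2 U_2$ for some nonnegative weights $(r_1,r_2)$, which holds when the symmetrised matrix built from the second derivatives of the $U_i$ is negative definite on $\mathcal{S}$. I would write out the relevant $2 \times 2$ matrix whose entries are $r_i \partial U_i$ derivatives, identify its diagonal entries with $r_1 H_1$ and $r_2 H_2$ (negative by the previous step) and its off-diagonal symmetrised entry with $\tfrac12(r_1 H_3 + r_2 H_4)$, using the fact that $\partial_{xy}U_1$ and $\partial_{yx}U_2$ reduce to $H_3$ and $H_4$ respectively up to positive factors. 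Negative definiteness of a symmetric $2\times2$ matrix with negative diagonal is then exactly the positivity of its determinant, which is condition \eqref{eq:Diagonal-Strict-Concavity-Condition-Explicit-1}. Invoking Rosen's theorem then gives uniqueness of $(\ell_*,r_*)$ for the GNEP.

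Finally I would transfer the result to the DP. By Theorem~\ref{Theorem:Necessary-and-Sufficiency-Optimality}, the unique GNEP solution $(\ell_*,r_*)$ yields the threshold equilibrium $(D_{[0,\ell_*]},D_{[r_*,1]})$ in the DP, and uniqueness within the Markovian class $(D_{S_1},D_{S_2})$ with closed $S_1 \subseteq [0,a]$, $S_2 \subseteq [b,1]$ follows verbatim from the argument in the proof of Theorem~\ref{Theorem:Global-Stability-Theorem}: applying the iteration (ii) with $A_1 = S_1$ produces an interval $A_2 = [r,1] \supseteq S_2$ by optimality, and the strict concavity of $f_2$ on $[b,1]$ forces $A_2 \setminus S_2 = \emptyset$ via the `small ball' argument, with the symmetric conclusion for $S_1$. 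I expect the main obstacle to be the careful bookkeeping in matching the (messy) partial derivatives of $U_1,U_2$ to $H_1,\dots,H_4$ and in correctly tracking the signs of the positive prefactors so that the sign of the determinant is genuinely \eqref{eq:Diagonal-Strict-Concavity-Condition-Explicit-1}; once this algebra is pinned down the invocation of Rosen's criterion and the transfer step are routine given the earlier results.
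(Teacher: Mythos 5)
Your proposal is correct and follows essentially the same route as the paper: conditions \eqref{eq:Concavity-Utility-P1}--\eqref{eq:Concavity-Utility-P2} give concavity of each $U_i$ in its own variable (equivalently $H_1,H_2 \le 0$, since the paper's Appendix~\ref{sec:smooth} formulas show $\partial_{xx}U_1 = H_1/(y-x)^3$, $\partial_{yy}U_2 = H_2/(y-x)^3$, $\partial_{xy}U_1 = H_3/(y-x)^3$, $\partial_{xy}U_2 = H_4/(y-x)^3$), condition \eqref{eq:Diagonal-Strict-Concavity-Condition-Explicit-1} is the negative-definiteness of the symmetrised pseudo-gradient Jacobian which is exactly Rosen's Theorem~6 criterion for strict diagonal concavity, uniqueness for the GNEP then follows from Rosen's Theorem~2, and the transfer to the DP is done verbatim via Theorem~\ref{Theorem:Necessary-and-Sufficiency-Optimality} and the Markovian-uniqueness argument from the proof of Theorem~\ref{Theorem:Global-Stability-Theorem}. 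The only difference is one of exposition: you unpack the $2\times 2$ determinant computation that the paper delegates to the citation of \cite{Rosen1965}.
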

\begin{proof}
	Conditions~\eqref{eq:Concavity-Utility-P1}--\eqref{eq:Concavity-Utility-P2} ensure that each utility function $s_{i} \mapsto U_{i}(s_{i},s_{-i})$, $i \in \{1,2\}$, is concave on $\mathcal{S}_{i}$ for each $s_{-i} \in \mathcal{S}_{-i}$. The condition~\eqref{eq:Diagonal-Strict-Concavity-Condition-Explicit-2} is sufficient for {\em strict diagonal concavity} according to Theorem 6 of \cite{Rosen1965}. The uniqueness result for the GNEP is an application of Theorem 2 in \cite{Rosen1965}, whereas uniqueness for the DP follows from the proof of Theorem~\ref{Theorem:Global-Stability-Theorem}.
\end{proof}
\begin{remark}\label{rem:xqc}
	For possible extensions of Theorem~\ref{Theorem:Rosen-Uniqueness} to quasi-concave utility functions see, for example, \cite{Arrow1961}. A comment on the relationship between the sufficient conditions for uniqueness of Nash equilibria used in Theorems~\ref{Theorem:Global-Stability-Theorem} and \ref{Theorem:Rosen-Uniqueness} can be found in Remark 3.3 of \cite{Li1987}.
\end{remark}

\section{Complex strategies and multiplayer GNEPs}\label{sec:examples}

The study of appropriate generalised games with $n>2$ players yields equilibria for the two-player Dynkin problem of Definition \ref{eq:NEP-Pure-Strategies} with more complex structures than the threshold type which has been previously studied. The systematic study of all cases $n>2$ is beyond the scope of this paper and so in this section we provide an example with $n=3$.

This example uses the following relaxation of Condition G1, under which the reward function $f_1$ has an additional convex portion:

\smallskip
{\bf Condition G2.}
	There exist points $a_{1}$ and $a_{2}$ with $0 < a_{1} \le a_{2} < b < 1$ such that:
	\begin{align*}
	(i) \quad & f_{1} \text{ is convex on } [0,a_{1}], \text{ concave on } [a_{1},a_{2}] \text{ and convex on } [a_{2},1],\\
	(ii) \quad & f_{2} \text{ is convex on } [0,b] \text{ and concave on } [b,1].
	\end{align*}

Define sets $\hat{\mathcal{S}}_{1} = \hat{\mathcal{S}}_{2} = [a_{1},a_{2}]$,  $\hat{\mathcal{S}}_{3} = [b,1]$ and $\hat{\mathcal{S}} = \prod_{i=1}^{3}\hat{\mathcal{S}}_{i}$. Let the utility functions $\hat{U}_{i} \colon 
[0,1]^3 \to \bar{\mathbb{R}}$, $i \in \{1,2,3\}$ be defined by 
\begin{equation}\label{eq:Auxiliary-NZSG}
\begin{split}
\hat{U}_{1}(x,y,z) = \frac{f_{1}(x) - g_{1,[z,1]}(x)}{x}, \\
\hat{U}_{2}(x,y,z) = \frac{f_{1}(y) - g_{1,[z,1]}(y)}{z - y}, \\
\hat{U}_{3}(x,y,z) = \frac{f_{2}(z) - g_{2,[0,y]}(z)}{z - y},
\end{split}
\end{equation}
(taking $\hat{U}_{2}(x,y,z)=\hat{U}_{3}(x,y,z)=-\infty$ if $y \geq z$).
Define the players' feasible strategy spaces by the set-valued maps $\hat{K}_{i} \colon \hat{\mathcal{S}}_{-i} \rightrightarrows \hat{\mathcal{S}}_{i}$, where
\begin{equation}
\label{eq:3-Player-Feasible-Strategy-Spaces}
\hat{K}_{1}(y,z) = [a_{1},y \wedge a_{2}], \enskip \hat{K}_{2}(x,z) = [x \vee a_{1},a_{2}], \enskip \hat{K}_{3}(x,y) = [b,1],
\end{equation}
so that the feasible strategy triples belong to the convex, compact set $\hat{\mathcal{C}}$ defined by
	\begin{equation}\label{eq:constraintset-3-player}
	\hat{\mathcal{C}} = \{(x,y,z) \in [a_{1},a_{2}] \times [a_{1},a_{2}] \times [b,1] \colon x \le y\}.
	\end{equation}

\begin{theorem}\label{Theorem:Existence-Nash-Equilibrium-Threshold-Only-One-Player}
	Suppose that the DP satisfies Condition G2. 
	Then:
	\begin{enumerate}
\item[(a)]
 there exists $s^{*} = (\ell^{1},\ell^{2},r) \in \hat{\mathcal{C}}$ with
\begin{equation}\label{eq:Auxiliary-NEP-2}
\hat{U}_{i}(s^{*}) = \sup\limits_{(s_{i},s_{-i}^{*}) \in \hat{\mathcal{C}}}\hat{U}_{i}(s_{i},s_{-i}^{*}), \enskip i \in \{1,2,3\},
\end{equation}
\item[(b)] a solution $s^{*} = (\ell^{1},\ell^{2},r) \in \hat{\mathcal C}$ to \eqref{eq:Auxiliary-NEP-2} satisfies 
	$\hat{U}_{2}(s^{*}) \geq 0$
	if and only if $(D_{[\ell^{1},\ell^{2}]},D_{[r,1]})$ is a Nash equilibrium for the DP.
\end{enumerate}
\end{theorem}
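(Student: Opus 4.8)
The plan is to handle the two assertions separately, in each case reducing to machinery already in place for the threshold-type case.

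For part (a) I would mirror the proof of Lemma~\ref{Lemma:Existence-Solution-GNEP}, verifying the hypotheses of the Arrow--Debreu existence result \cite{Arrow1954} for the three-player game \eqref{eq:Auxiliary-NEP-2}. The set $\hat{\mathcal C}$ is convex and compact, and the maps $\hat K_1(y,z)=[a_1,y\wedge a_2]$, $\hat K_2(x,z)=[x\vee a_1,a_2]$, $\hat K_3(x,y)=[b,1]$ are nonempty, compact- and convex-valued, and continuous. The key structural point is that Condition~G2 forces $a_2<b$, so on $\hat{\mathcal C}$ one always has $x\ge a_1>0$ and $z-y\ge b-a_2>0$; hence the denominators in \eqref{eq:Auxiliary-NZSG} never vanish and each $\hat U_i$ is continuous, with no analogue of Assumption~1' required. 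For the quasi-concavity in own variables I would invoke Lemma~\ref{Lemma:Quasi-Concavity}: since $[a_1,a_2]\subseteq[0,b)\subseteq[0,z)$, the reduced functions $g_{1,[z,1]}$ and $g_{2,[0,y]}$ are linear on the relevant intervals (cf. \eqref{eq:gexp1}--\eqref{eq:gexp2}), while $f_1$ is concave on $[a_1,a_2]$ and $f_2$ is concave on $[b,1]$ by Condition~G2. Thus each numerator is concave and each denominator ($x$, $z-y$, $z-y$ respectively) is linear and positive, so $\hat U_1,\hat U_2,\hat U_3$ are quasi-concave in $x,y,z$, and existence of $s^*=(\ell^1,\ell^2,r)\in\hat{\mathcal C}$ follows.

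For the forward implication of part (b), assume $\hat U_2(s^*)\ge 0$. By Lemma~\ref{Lemma:Characterisation-1} and Proposition~3.2 of \cite{Dayanik2003}, player~1's best response to $D_{[r,1]}$ is governed by the smallest nonnegative concave majorant of the obstacle $\vartheta\coloneqq f_1-g_{1,[r,1]}$ on $[0,r]$, and $D_{[\ell^1,\ell^2]}$ is optimal iff this majorant has contact set exactly $[\ell^1,\ell^2]$. I would introduce the candidate $u$ equal to $\hat U_1(s^*)\,x$ on $[0,\ell^1]$, to $\vartheta$ on $[\ell^1,\ell^2]$, and to $\hat U_2(s^*)(r-x)$ on $[\ell^2,r]$, and show it is this majorant. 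Nonnegativity is exactly where $\hat U_2(s^*)\ge 0$ enters: it gives $\vartheta(\ell^2)\ge 0$, whence $\hat U_1(s^*)\ge \vartheta(\ell^2)/\ell^2\ge 0$ and so $\vartheta(\ell^1)=\hat U_1(s^*)\,\ell^1\ge 0$; concavity of $\vartheta$ on $[\ell^1,\ell^2]\subseteq[a_1,a_2]$ then forces $\vartheta\ge 0$ there, hence $u\ge 0$. The majorant and concavity properties follow from the GNEP optimality of $s^*$: maximality of the secant slope $x\mapsto\vartheta(x)/x$ at $\ell^1$ and of $y\mapsto\vartheta(y)/(r-y)$ at $\ell^2$ yields the support-line (smooth-fit) inequalities at the two junctions, while convexity of $\vartheta$ on $[0,a_1]$ and on $[a_2,r]$, together with $\vartheta(0)=0$ and $\vartheta(r)\le 0$, extends the majorant property to the extreme regions exactly as in \eqref{eq:ext1}. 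Minimality is then routine, so $D_{[\ell^1,\ell^2]}$ is optimal for player~1.

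For player~2, since $r\ge b>a_2\ge\ell^2$, from any $z>\ell^2$ the process enters $[\ell^1,\ell^2]$ by first hitting $\ell^2$, so $g_{2,[\ell^1,\ell^2]}=g_{2,[0,\ell^2]}$ on $(\ell^2,1]$; there player~2's problem is precisely the two-player problem of Theorem~\ref{Theorem:Necessary-and-Sufficiency-Optimality} with left threshold $\ell^2$, and maximality of $\hat U_3(\ell^1,\ell^2,\cdot)$ over $[b,1]$ gives optimality of $D_{[r,1]}$ on $(\ell^2,1]$ (the extension from $[b,1]$ to $(\ell^2,1]$ using convexity of $f_2$ on $[\ell^2,b]$). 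On $[0,\ell^1)$ the obstacle $f_2-g_{2,[\ell^1,\ell^2]}$ is convex, with endpoint values $f_2(0)\le 0$ and $f_2(\ell^1)-g_2(\ell^1)\le 0$ by $f_2\le g_2$, hence nonpositive throughout; by the necessary condition for stopping in Remark~\ref{Remark:Necessary-Condition-For-Stopping} player~2 does not stop there, so $D_{[r,1]}$ is a best response on all of $[0,1]$. Conversely, if $(D_{[\ell^1,\ell^2]},D_{[r,1]})$ is a DP equilibrium then player~1's value equals $f_1$ on $[\ell^1,\ell^2]$ and is nonnegative, giving $\vartheta(\ell^2)\ge 0$ and hence $\hat U_2(s^*)=\vartheta(\ell^2)/(r-\ell^2)\ge 0$, which closes the equivalence. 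I expect the main obstacle to be the nonnegativity and exact-contact analysis for player~1: confirming that $u$ touches $\vartheta$ on precisely $[\ell^1,\ell^2]$ and is genuinely the smallest nonnegative concave majorant is where the sign condition $\hat U_2(s^*)\ge 0$ is indispensable, since without it the majorant degenerates and the stopping region becomes empty; a secondary subtlety is that player~2's state space $[0,\ell^1)\cup(\ell^2,1]$ is disconnected, so optimality must be checked on each component rather than inherited wholesale from the two-player result.
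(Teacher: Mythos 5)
Your proposal is correct and follows essentially the same route as the paper: part (a) via the Arrow--Debreu argument of Lemma~\ref{Lemma:Existence-Solution-GNEP}, and part (b) by exhibiting the candidate payoff as the smallest nonnegative concave majorant of $f_{1}-g_{1,[r,1]}$ for player 1 and treating the components $[0,\ell^{1})$ and $(\ell^{2},1]$ separately for player 2, with the sign condition $\hat{U}_{2}(s^{*})\ge 0$ entering exactly where the paper uses it (nonnegativity of the majorant, and its derivation from the DP value function in the converse). The only difference is organizational: the paper routes part (b) through the intermediate Problem~\eqref{eq:Problem-P-} and the appendix Propositions~\ref{Proposition:Existence-Optimal-Strategy-Two-Points} and \ref{Proposition:Existence-Optimal-Strategy-Threshold-Only-One-Player}, whose proofs are precisely your majorant construction and your componentwise player-2 analysis.
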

\begin{proof}
Part (a) follows as in the proof of Lemma~\ref{Lemma:Existence-Solution-GNEP}. For part (b), we claim that the pair $(\ell^{1},\ell^{2})$ solves the following problem:
	\begin{quotation}
		\noindent{\bf Problem:} Find two points $\ell^{1}, \ell^{2}$ satisfying
		\begin{equation}\label{eq:Problem-P-}\tag{P}
		\begin{split}
		i) & \quad a_{1} \le \ell^{1} \le \ell^{2} \le a_{2}, \\
		ii) & \quad \hat{U}_{1}(x,\ell^{2},r)
		\le 
		\hat{U}_{1}(\ell^{1},\ell^{2},r),
		\quad \forall x \in (0,r), \\
		iii) & \quad \hat{U}_{2}(\ell^{1},y,r)
		\le \hat{U}_{2}(\ell^{1},\ell^{2},r),
		\quad \forall y \in [0,r).
		\end{split}
		\end{equation}
	\end{quotation}

To establish part iii) note that the function $y \mapsto f_{1}(y) - g_{1,[r,1]}(y)$ is zero at $y=0$, convex for $y \in [0,a_{1}]$, concave for $y \in [a_{1},a_{2}]$, convex for $y \in [a_{2},r]$, nonnegative at $y=\ell^{2}$ and negative at $y=r$. It is then a straightforward exercise in convex analysis, similar to that in the proof of Theorem \ref{Theorem:Existence-Generalised-Nash-Equilibrium-Stopping}, to show that the maximum of the function $y \mapsto \hat{U}_{2}(\ell^{1},y,r)$ on $[0,r)$ must be attained at a point in $[a_1,a_2]$. Taking $i=2$ in \eqref{eq:Auxiliary-NEP-2} then establishes the claim. Part ii) follows similarly.

The necessity and sufficiency claim for the Nash equilibrium in stopping strategies then follows by applying Propositions~\ref{Proposition:Existence-Optimal-Strategy-Two-Points} and \ref{Proposition:Existence-Optimal-Strategy-Threshold-Only-One-Player} in the Appendix.
\end{proof}

\appendix
\numberwithin{equation}{section}

\section{Other Markov processes and discounting}\label{rem:ext}
Let $X = (X_{t})_{t \ge 0}$ be a continuous strong Markov process defined on an interval $E = (\ell,r)$. Suppose that the rewards in the DP are discounted by a factor $\lambda \ge 0$, so that \eqref{eq:Game-Payoff-Functional} becomes \begin{equation}\tag{1.1'}
\begin{split}
\mathcal{J}_{i}(\tau_{1},\tau_{2}) \coloneqq e^{-\lambda(\tau_{i} \wedge \tau_{-i})}\{f_{i}(X_{\tau_{i}})\mathds{1}_{\{\tau_{i} < \tau_{-i}\}} + g_{i}(X_{\tau_{-i}})\mathds{1}_{\{\tau_{-i} < \tau_{i}\}} + h_{i}(X_{\tau_{i}})\mathds{1}_{\{\tau_{i} = \tau_{-i}\}}\}, \\
i \in \{1,2\}.
\end{split}
\end{equation}
Lemma~\ref{Lemma:Characterisation-1} has a straightforward extension to the case $\lambda > 0$. Extending the concept of superharmonic functions in Definition~\ref{Definition:Superharmonic-Function}, we say that a measurable function $\phi \colon E_\Delta \to \mathbb{R}$ is {\em 
	$\lambda$-
	superharmonic} on a set $A \in \mathcal{B}(E_{\Delta})$ if for every $x \in E$ and $\tau \in \mathcal{T}$,
\[
\phi(x) \ge \mathds{E}^{x}[e^{-\lambda (\tau \wedge D_{A^{c}})}
\phi(X_{\tau \wedge D_{A^{c}}})].
\] The function $\phi_A$ introduced in Definition~\ref{def:red} is given more generally by,
\[
\phi_{A}(x) \coloneqq \mathds{E}^{x}\left[
e^{-\lambda D_{A}}\phi(X_{D_{A}})\right].
\]
It was noted in Section \ref{sec:fnclasses} that 
$\phi_A$ is continuous when $\lambda = 0$, $g$ is 
continuous and $A$ is closed, since $X=X^E$ is a subprocess of a Brownian motion. This same property, which is important for ensuring that the obstacle in problem \eqref{eq:Single-Payoff-OSP-Alt-2-2} is continuous, also holds for $\lambda \ge 0$ when $X$ is a more general diffusion with strictly positive diffusion coefficient \cite{Schilling2012}. 
Furthermore, when $X$ is a subprocess of a regular diffusion $Z = (Z_t)_{t \geq 0}$, the results in Sections \ref{Section:Existence-Of-Equilibria}--\ref{Section:Stability-Uniqueness} hold under an appropriate modification of Condition G1. We now briefly discuss this extension when $Z$ satisfies the stochastic differential equation,
\begin{equation}
{d}Z_{t} = \mu(Z_{t}){d}t + \sigma(Z_{t}){d}W_{t},
\end{equation}
where $W = (W_{t})_{t \ge 0}$ is a standard Brownian motion, $\mu \colon E \to \mathbb{R}$ and $\sigma \colon E \to (0,\infty)$ are measurable functions, $\mu$ bounded and $\sigma$ continuous, satisfying the following condition: for every $x \in E$,
\[
\int_{x-\varepsilon}^{x+\varepsilon}\frac{1 + |\mu(y)|}{\sigma^{2}(y)}{d}y < \infty \enskip \text{for some } \varepsilon > 0.
\]
Let $\mathcal{G} = \frac{1}{2}\sigma^{2}(\cdot)\frac{{{d}}^{2}}{{d}x} + \mu(\cdot)\frac{d}{{d}x}$ denote the infinitesimal generator corresponding to $Z$.

\subsection{Undiscounted rewards}
For the case $\lambda = 0$, we first recall from \cite{Dayanik2003} that there is a continuous increasing function $S$ on $E$, the {\em scale function}, which satisfies $\mathcal{G}S(\cdot) \equiv 0$. Let $\tilde{\ell} = S(\ell)$, $\tilde{r} = S(r)$ and $\tilde{X} = (\tilde{X}_{t})_{t \ge 0}$ be a Brownian motion on $\tilde{E} = (\tilde{\ell},\tilde{r})$. Then, it follows from Proposition~3.3 of \cite{Dayanik2003} that the DP corresponding to the process $X$ and rewards $f_i$, $g_{i}$ and $h_{i}$ on $E$ can be studied by an equivalent DP corresponding to $\tilde{X}$ with reward functions $\tilde{f}_{i}(\cdot) = f_i(S^{-1}(\cdot))$, $\tilde{g}_{i}(\cdot) = g_i(S^{-1}(\cdot))$, $\tilde{h}_{i}(\cdot) = h_i(S^{-1}(\cdot))$ on $\tilde{E}$.
\subsection{Discounted rewards}
For the case $\lambda > 0$, we first let $\psi^{\lambda}$ and $\phi^{\lambda}$ denote the fundamental solutions to the diffusion generator equation $\mathcal{G}w = \lambda w$, where $\psi^{\lambda}$ is strictly increasing and $\phi^{\lambda}$  is strictly decreasing \cite[p.~177]{Dayanik2003}. Let $F(\cdot) = \frac{\psi^{\lambda}(\cdot)}{\phi^{\lambda}(\cdot)}$, $\tilde{\ell} = F(\ell)$, $\tilde{r} = F(r)$ and $\tilde{X} = (\tilde{X}_{t})_{t \ge 0}$ be a Brownian motion on $\tilde{E} = (\tilde{\ell},\tilde{r})$. Then, it follows from Proposition~4.3 of \cite{Dayanik2003} that the DP corresponding to the process $X$ and rewards $f_i$, $g_{i}$ and $h_{i}$ on $E$ discounted by $\lambda > 0$ can be studied by an equivalent DP corresponding to $\tilde{X}$ with reward functions $\tilde{f}_{i}(\cdot) = \frac{f_i}{\phi^{\lambda}}(F^{-1}(\cdot))$, $\tilde{g}_{i}(\cdot) = \frac{g_i}{\phi^{\lambda}}(F^{-1}(\cdot))$, $\tilde{h}_{i}(\cdot) = \frac{h_i}{\phi^{\lambda}}(F^{-1}(\cdot))$ on $\tilde{E}$ {\em without discounting}.

\section{Expected payoffs for threshold strategies}\label{Section:Payoff-Derivation}
If players $1$ and $2$ use the strategies $D_{[0,\ell]}$ and $D_{[r,1]}$ respectively, where $0 \le \ell < r \le 1$, then the expected payoff $M^{x}_{1}(D_{[0,\ell]},D_{[r,1]})$ for player 1 (cf. \eqref{eq:Expected-Game-Payoff-Functional}) satisfies,

\begin{align*}
M^{x}_{1}(D_{[0,\ell]},D_{[r,1]}) = {} & \mathds{E}^{x}\bigl[f_{1}(X_{D_{[0,\ell]}})\mathds{1}_{\{D_{[0,\ell]} < D_{[r,1]}\}} + g_{1}(X_{D_{[r,1]}})\mathds{1}_{\{D_{[r,1]} < D_{[0,\ell]}\}}\bigr] \nonumber \\
& + \mathds{E}^{x}\bigl[h_{1}(X_{D_{[0,\ell]}})\mathds{1}_{\{D_{[0,\ell]} = D_{[r,1]}\}}\bigr] \nonumber \\
= {} & \begin{cases}
f_{1}(x),& \forall x \in [0,\ell] \\
f_{1}(\ell)\cdot\mathds{P}^{x}(\{D_{[0,\ell]} < D_{[r,1]}\}) + g_{1}(r)\cdot\mathds{P}^{x}(\{D_{[0,\ell]} > D_{[r,1]}\}),& \forall x \in (\ell,r) \\
g_{1}(x),& \forall x \in [r,1]
\end{cases} \nonumber \\
= {} & \begin{cases}
f_{1}(x),& \forall x \in [0,\ell] \\
f_{1}(\ell)\cdot\frac{r - x}{r - \ell} + g_{1}(r)\cdot\frac{x - \ell}{r - \ell},& \forall x \in (\ell,r) \\
g_{1}(x),& \forall x \in [r,1]
\end{cases}
\end{align*}
Analogously, the expected payoff $M^{x}_{2}(D_{[0,\ell]},D_{[r,1]})$ for player 2 satisfies,
\begin{equation*}
M^{x}_{2}(D_{[0,\ell]},D_{[r,1]}) = \begin{cases}
g_{2}(x),& \forall x \in [0,\ell] \\
g_{2}(\ell)\cdot\frac{r - x}{r - \ell} + f_{2}(r)\cdot\frac{x - \ell}{r - \ell},& \forall x \in (\ell,r) \\
f_{2}(x),& \forall x \in [r,1].
\end{cases}
\end{equation*}
\section{Derivatives of utility functions}\label{sec:smooth}
Throughout this section we assume Condition G1' holds. 
We first provide general formulas for the first and second partial derivatives of a utility function $U(x,y)$ which is of the form $U(x,y) = \frac{F(x,y)}{y-x}$.
\begin{align}
\partial_{x}U(x,y) & = \frac{\partial_{x}F(x,y)(y-x) + F(x,y)}{(y-x)^{2}}, \quad \partial_{y}U(x,y) = \frac{\partial_{y}F(x,y)(y-x) - F(x,y)}{(y-x)^{2}} \label{eq:Utility-Dx}\\
\partial_{xx}U(x,y) & = \frac{\partial_{xx}F(x,y)(y-x)^{2} + 2\bigl[\partial_{x}F(x,y)(y-x) + F(x,y)\bigr]}{(y-x)^{3}} \label{eq:Utility-Dxx}\\
\partial_{yy}U(x,y) & = \frac{\partial_{yy}F(x,y)(y-x)^{2} - 2\bigl[\partial_{y}F(x,y)(y-x) - F(x,y)\bigr]}{(y-x)^{3}}\label{eq:Utility-Dyy} \\
\partial_{xy}U(x,y) & = \frac{\partial_{xy}F(x,y)(y-x) + \partial_{x}F(x,y) + \partial_{y}F(x,y)}{(y-x)^{2}} - 2\frac{\bigl[\partial_{x}F(x,y)(y-x) + F(x,y)\bigr]}{(y-x)^{3}} \nonumber \\
& = \frac{\partial_{xy}F(x,y)(y-x) - \partial_{y}F(x,y) - \partial_{x}F(x,y)}{(y-x)^{2}} + 2\frac{\bigl[\partial_{y}F(x,y)(y-x) - F(x,y)\bigr]}{(y-x)^{3}} \label{eq:Utility-Dxy}
\end{align}
Using equation~\eqref{eq:Auxiliary-GNZSG-Utilities} for the utility functions gives the following expressions for their partial derivatives,
\begin{align*}
\partial_{x}U_{1}(x,y) & = \frac{f_{1}(x) + f_{1}'(x)(y-x) - g_{1}(y)}{(y-x)^{2}}, \quad \partial_{y}U_{2}(x,y) = \frac{g_{2}(x) + f_{2}'(y)(y - x) - f_{2}(y) }{(y-x)^{2}} \\
\partial_{xx}U_{1}(x,y) & = \frac{f_{1}''(x)(y-x)^{2} + 2\bigl[f_{1}(x) + f_{1}'(x)(y-x) - g_{1}(y)\bigr]}{(y-x)^{3}}\\
\partial_{yy}U_{2}(x,y) & = \frac{f_{2}''(y)(y-x)^{2} + 2\bigl[f_{2}(y) - f_{2}'(y)(y-x) - g_{2}(x)\bigr]}{(y-x)^{3}} \\
\partial_{xy}U_{1}(x,y) & = \frac{2\bigl[g_{1}(y) - f_{1}(x)\bigr] - (f_{1}'(x)+g_{1}'(y))(y-x)}{(y-x)^{3}}\\
\partial_{xy}U_{2}(x,y) & = \frac{2\bigl[g_{2}(x)- f_{2}(y)\bigr] + (g_{2}'(x) + f_{2}'(y))(y-x)}{(y-x)^{3}}
\end{align*}

\section{A verification theorem}\label{Section:Threshold-Extension-Example}
\begin{proposition}\label{Proposition:Existence-Optimal-Strategy-Two-Points}
	Under Condition G2 and given $r \in (a_{2},1]$, $(\ell^{1},\ell^{2})$ is a solution to Problem~\eqref{eq:Problem-P-} if and only if
	\begin{equation}\label{eq:Problem-P-Optimal-Stopping-1}
	V_{1}^{[r,1]}(x) \coloneqq \sup\limits_{\tau_{1} \in \mathcal{T}}M^{x}_{1}(\tau_{1},D_{[r,1]}) = M^{x}_{1}(D_{[\ell^{1},\ell^{2}]},D_{[r,1]}),\quad \forall x \in [0,1].
	\end{equation}
\end{proposition}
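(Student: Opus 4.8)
The plan is to follow the template of Theorem~\ref{Theorem:Necessary-and-Sufficiency-Optimality}, reducing the claim to a deterministic statement about the smallest nonnegative concave majorant of the obstacle $\vartheta \coloneqq f_{1}-g_{1,[r,1]}$ on $[0,r]$. Indeed, by Lemma~\ref{Lemma:Characterisation-1} and Proposition~3.2 of \cite{Dayanik2003}, the map $x\mapsto V_{1}^{[r,1]}(x)-g_{1,[r,1]}(x)$ is exactly the smallest nonnegative concave majorant of $\vartheta$ on $[0,r]$, and vanishes on $[r,1]$. Writing $u(x)\coloneqq M^{x}_{1}(D_{[\ell^{1},\ell^{2}]},D_{[r,1]})-g_{1,[r,1]}(x)$, the identity \eqref{eq:Problem-P-Optimal-Stopping-1} is therefore equivalent to the assertion that $u$ is this majorant, and it is this equivalence that I would establish.

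First I would compute $u$ explicitly. Using the payoff identities of Appendix~\ref{Section:Payoff-Derivation} together with \eqref{eq:Threshold-Auxiliary-Optimisation-Proof-2}, and accounting for the feature absent in the two-player threshold case that a process started in $(0,\ell^{1})$ may be absorbed at the boundary $0$, where the reward vanishes, before entering the stopping interval $[\ell^{1},\ell^{2}]$, one finds that $u$ equals $x\,\hat{U}_{1}(\ell^{1},\ell^{2},r)$ on $[0,\ell^{1}]$, equals $\vartheta$ on $[\ell^{1},\ell^{2}]$, equals $\vartheta(\ell^{2})\tfrac{r-x}{r-\ell^{2}}$ on $[\ell^{2},r]$, and equals $0$ on $[r,1]$, where the utilities are read off from \eqref{eq:Auxiliary-NZSG}. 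Thus $u$ is a line through the origin on $[0,\ell^{1}]$ and a line through $(r,0)$ on $[\ell^{2},r]$, touching the graph of $\vartheta$ at $\ell^{1}$ and $\ell^{2}$ respectively.

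For necessity, assuming $(\ell^{1},\ell^{2})$ solves \eqref{eq:Problem-P-}, I would check the four defining properties of the smallest nonnegative concave majorant. The majorant inequality $u\ge\vartheta$ on $[0,\ell^{1})\cup(\ell^{2},r)$ is precisely conditions ii) and iii), while at $x=r$ it follows from $\vartheta(r)=f_{1}(r)-g_{1}(r)\le 0$. Nonnegativity follows because iii) at $y=0$ gives $\vartheta(\ell^{2})\ge 0$, and ii) at $x=\ell^{2}$ then gives $\vartheta(\ell^{1})/\ell^{1}\ge\vartheta(\ell^{2})/\ell^{2}\ge 0$. For concavity, conditions ii) and iii) state exactly that the two linear pieces are support lines of $\vartheta$ touching at $\ell^{1}$ and $\ell^{2}$, which forces the one-sided slopes to decrease across each junction, while $\vartheta$ is concave on $[\ell^{1},\ell^{2}]\subseteq[a_{1},a_{2}]$ by Condition~G2; minimality is the standard chord argument, since any nonnegative concave competitor dominates $\vartheta=u$ on $[\ell^{1},\ell^{2}]$ and, being nonnegative at $0$ and $r$, lies above the two chords defining $u$ elsewhere.

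For sufficiency, assuming \eqref{eq:Problem-P-Optimal-Stopping-1}, the function $u$ is the smallest nonnegative concave majorant and hence concave with $u(0)=u(r)=0$. Concavity of $u$ together with $u(0)=0$ makes $x\mapsto u(x)/x$ non-increasing, so $\vartheta(x)/x\le u(x)/x\le u(\ell^{1})/\ell^{1}=\hat{U}_{1}(\ell^{1},\ell^{2},r)$ for all $x\in(0,r)$, which is ii); the symmetric argument using $u(r)=0$ shows $y\mapsto u(y)/(r-y)$ is non-decreasing and yields iii). Finally, since $u=\vartheta$ on $[\ell^{1},\ell^{2}]$ and $u$ is concave there, $\vartheta$ must be concave on $[\ell^{1},\ell^{2}]$, whence $[\ell^{1},\ell^{2}]\subseteq[a_{1},a_{2}]$ by Condition~G2, giving i). I expect the main obstacle to be the region $(0,\ell^{1})$ strictly below the stopping interval: one must justify that boundary absorption contributes zero so that the payoff collapses to the origin-secant encoded by $\hat{U}_{1}$, and then verify the slope decrease at the junction $\ell^{1}$ with no differentiability available, relying solely on the supporting-line form of condition ii).
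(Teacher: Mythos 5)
Your proof of the direction Problem~\eqref{eq:Problem-P-} $\implies$ \eqref{eq:Problem-P-Optimal-Stopping-1} is essentially the paper's own argument: the same explicit formula for $u$ (the secant through the origin on $[0,\ell^{1}]$, the obstacle $\vartheta = f_{1}-g_{1,[r,1]}$ on $[\ell^{1},\ell^{2}]$, the secant through $(r,0)$ on $[\ell^{2},r]$), the same reduction via Lemma~\ref{Lemma:Characterisation-1} and Proposition~3.2 of \cite{Dayanik2003} to showing $u$ is the smallest nonnegative concave majorant of $\vartheta$ on $[0,r]$, and the same extraction of nonnegativity from condition iii) at $y=0$ and condition ii) at $x=\ell^{2}$. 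Where you genuinely depart from the paper is the converse direction, which the paper omits entirely (it says only that the arguments mirror Theorem~\ref{Theorem:Necessary-and-Sufficiency-Optimality}); the template it points to establishes that direction probabilistically, via superharmonicity of $V_{1}^{[r,1]}-g_{1,[r,1]}$ and the hitting-probability computation \eqref{eq:Threshold-Auxiliary-Optimisation-Proof-5}, whereas you argue deterministically: since $u$ coincides with the concave majorant and $u(0)=u(r)=0$, the ratios $x\mapsto u(x)/x$ and $y\mapsto u(y)/(r-y)$ are monotone, which together with $\vartheta\le u$ and the explicit linear form of $u$ off $[\ell^{1},\ell^{2}]$ yields ii) and iii) directly. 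That is a clean and valid alternative, and arguably simpler than transplanting the stochastic argument.

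One caveat concerns your derivation of condition i) in that converse direction. You infer $[\ell^{1},\ell^{2}]\subseteq[a_{1},a_{2}]$ from concavity of $\vartheta$ on $[\ell^{1},\ell^{2}]$, but Condition G2 imposes only non-strict convexity of $f_{1}$ on $[0,a_{1}]$ and $[a_{2},1]$, so $f_{1}$ may contain linear pieces outside $[a_{1},a_{2}]$ on which $\vartheta$ is simultaneously concave (and the inference is vacuous when $\ell^{1}=\ell^{2}$). So as stated this step does not force i) in all edge cases; it would need either a strict convexity/concavity hypothesis or the convention that i) is part of the data of the pair $(\ell^{1},\ell^{2})$ under consideration. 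Since the paper proves nothing at all in this direction, this is a gap relative to the full ``if and only if'' claim rather than relative to the paper's written proof, but it is worth flagging.
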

\begin{proof}
	The arguments are more or less the same as those establishing Theorem~\ref{Theorem:Necessary-and-Sufficiency-Optimality}. For the sake of brevity we therefore only show the proof of necessity (Problem~\eqref{eq:Problem-P-} $\implies$ \eqref{eq:Problem-P-Optimal-Stopping-1}). 
	
	Define $u_{r}$ on $[0,1]$ by,
	\begin{align}\label{eq:Candidate-Solution-Two-Point-Case}
	u_{r}(x) & = M^{x}_{1}(D_{[\ell^{1},\ell^{2}]},D_{[r,1]}) - g_{1,[r,1]}(x) \nonumber \\ 
	& = \begin{cases}
	\left(f_{1}(\ell^{1}) - g_{1,[r,1]}(\ell^{1})\right)\frac{x}{\ell^{1}},& x \in [0,\ell^{1}), \\
	f_{1}(x) - g_{1,[r,1]}(x),& x \in [\ell^{1},\ell^{2}), \\
	\left(f_{1}(\ell^{2}) - g_{1,[r,1]}(\ell^{2})\right)\frac{r-x}{r-\ell^{2}},& x \in [\ell^{2},r), \\
	0,& x \in [r,1].
	\end{cases}
	\end{align}
	
	Suppose $(\ell^{1},\ell^{2})$ is a solution to Problem~\eqref{eq:Problem-P-}. Similarly to Theorem~\ref{Theorem:Necessary-and-Sufficiency-Optimality}, we will prove \eqref{eq:Problem-P-Optimal-Stopping-1} by showing that $u_{r}$ is the smallest non-negative concave majorant of $f_{1} - g_{1,[r,1]}$ on $[0,r]$. Initially we will analyse $u_r$ separately on $[0,\ell^1]$ and $[\ell^1,\ell^2]$. 
	Observe firstly that the function $f_{1} - g_{1,[r,1]}$ is nonnegative when evaluated at the points $\ell^1$ and $\ell^2$ and hence, by concavity, on $[\ell^1,\ell^2]$. Recalling \eqref{eq:Auxiliary-NZSG}, this follows from  \eqref{eq:Problem-P-}, since $f_{1}(0) = g_{1,[r,1]}(0)$ and so $f_{1}(\ell^{2}) - g_{1,[r,1]}(\ell^{2}) \ge 0$. Also
	\[
	f_{1}(x) - g_{1,[r,1]}(x) \le \left(f_{1}(\ell^{1}) - g_{1,[r,1]}(\ell^{1})\right)\frac{x}{\ell^{1}}, \enskip \forall x \in (0,r),
	\]
and taking $x = \ell^{2}$ shows that $f_{1}(\ell^{1}) - g_{1,[r,1]}(\ell^{1}) \ge 0$. Therefore $u_{r}$ is a non-negative majorant of $f_{1} - g_{1,[r,1]}$ on $[0,\ell^{1}]$. This is also true on $[\ell^{1},r]$, since $f_{1}(r) \le g_{1}(r)$ and so
	\begin{equation}\label{eq:lastone}
		f_{1}(x) - g_{1,[r,1]}(x) \le \left(f_{1}(\ell^{2}) - g_{1,[r,1]}(\ell^{2})\right)\left(\frac{r-x}{r-\ell^{2}}\right),\enskip \forall x \in [0,r].
	\end{equation}	
Concavity holds for $u_{r}$ on the three intervals $[0,\ell^{1}]$, $[\ell^{1},\ell^{2}]$ and $[\ell^{2},r]$ separately and, arguing as in the proof of Theorem~\ref{Theorem:Necessary-and-Sufficiency-Optimality}, we can show that $u_{r}$ is continuous and concave on the entire interval $[0,r]$, completing the proof.
\end{proof}
\begin{proposition}\label{Proposition:Existence-Optimal-Strategy-Threshold-Only-One-Player}
	Under Condition G2, for every $\ell^{1},\ell^{2}$ satisfying $0 < \ell^{1} \le \ell^{2} < b$, a point $r \in [b,1]$ satisfies \eqref{eq:ex2} with $\ell = \ell^{2}$ and $U_2=\hat U_3$ if and only if
	\begin{equation}\label{eq:Threshold-Optimisation-2-2}
	V_{2}^{[\ell^{1},\ell^{2}]}(x) \coloneqq \sup\limits_{\tau_{2} \in \mathcal{T}}M^{x}_{2}(D_{[\ell^{1},\ell^{2}]},\tau_{2}) = M^{x}_{2}(D_{[\ell^{1},\ell^{2}]},D_{[r,1]}),\quad \forall x \in [0,1].
	\end{equation}
\end{proposition}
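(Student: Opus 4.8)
The plan is to mirror the player-2 direction of the proof of Theorem~\ref{Theorem:Necessary-and-Sufficiency-Optimality}, the only genuinely new feature being that player~1's stopping set is the two-sided interval $[\ell^{1},\ell^{2}]$ rather than $[0,\ell^{2}]$. Applying Lemma~\ref{Lemma:Characterisation-1} with $f=f_{2}$, $g=g_{2}$ and $A=[\ell^{1},\ell^{2}]$, the value $V_{2}^{[\ell^{1},\ell^{2}]}$ equals $g_{2,[\ell^{1},\ell^{2}]}+\tilde V^{A}$, where $\tilde V^{A}$ is the value of an optimal stopping problem for the subprocess $X^{A^{c}}$ with obstacle $\vartheta_{2}\coloneqq f_{2}-g_{2,[\ell^{1},\ell^{2}]}$. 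Since the subprocess is killed on entering $A$, its state space splits into the two disconnected intervals $[0,\ell^{1})$ and $(\ell^{2},1]$, on each of which the problem may be analysed separately, and by Proposition~3.2 of \cite{Dayanik2003} the restriction of $\tilde V^{A}$ is the smallest nonnegative concave majorant of $\vartheta_{2}$ on each.

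First I would record the decisive observation that, for $x>\ell^{2}$, the process enters $A$ from above at the single point $\ell^{2}$, so that $g_{2,[\ell^{1},\ell^{2}]}$ agrees with $g_{2,[0,\ell^{2}]}$ on $(\ell^{2},1]$; consequently $\hat{U}_{3}(\ell^{1},\ell^{2},z)$ coincides with the two-player slope $U_{2}(\ell^{2},z)$ of \eqref{eq:Auxiliary-GNZSG-Utilities}, and the hypothesis ``\eqref{eq:ex2} with $\ell=\ell^{2}$ and $U_{2}=\hat{U}_{3}$'' is exactly the two-player condition $U_{2}(\ell^{2},z)\le U_{2}(\ell^{2},r)$ for all $z\in(\ell^{2},1]$. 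On the component $(\ell^{2},1]$ the obstacle $\vartheta_{2}$ is therefore identical to the one in the second half of Theorem~\ref{Theorem:Necessary-and-Sufficiency-Optimality}, and since Condition~G2(ii) coincides with Condition~G1(ii) for $f_{2}$ (convex on $[0,b]$, concave on $[b,1]$) while $\ell^{2}<b\le r$, the convex-analytic argument there applies verbatim: setting
\[
u_{r}(x)=\begin{cases} 0, & x\in[0,\ell^{2}], \\ \bigl(f_{2}(r)-g_{2,[\ell^{1},\ell^{2}]}(r)\bigr)\dfrac{x-\ell^{2}}{r-\ell^{2}}, & x\in[\ell^{2},r], \\ f_{2}(x)-g_{2,[\ell^{1},\ell^{2}]}(x), & x\in[r,1], \end{cases}
\]
one shows that the $\hat{U}_{3}$ condition holds if and only if $u_{r}$ is the smallest nonnegative concave majorant of $\vartheta_{2}$ on $(\ell^{2},1]$, which by Lemma~\ref{Lemma:Characterisation-1} and Proposition~3.2 of \cite{Dayanik2003} is equivalent to \eqref{eq:Threshold-Optimisation-2-2} holding on $[\ell^{2},1]$. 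Here $r\ge b$ ensures that the convex portion of $f_{2}$ lies in the continuation region $[\ell^{2},r]$, where $u_{r}$ is linear, and the concave portion contains the stopping region $[r,1]$.

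It then remains to treat the left component $[0,\ell^{1})$, thereby extending \eqref{eq:Threshold-Optimisation-2-2} to all $x\in[0,1]$. I would use that on $[0,\ell^{1}]$ the reduced function is the line $g_{2,[\ell^{1},\ell^{2}]}(x)=g_{2}(\ell^{1})\,x/\ell^{1}$, so $\vartheta_{2}$ is convex there by Condition~G2(ii) (as $\ell^{1}\le\ell^{2}<b$), vanishes at $x=0$ (since $f_{2}(0)=g_{2}(0)=0$) and is nonpositive at $x=\ell^{1}$ (since $f_{2}\le g_{2}$ by Assumption~\ref{assumption:Preliminary-Assumption-1}); a convex function with these endpoint values is nonpositive throughout $[0,\ell^{1}]$. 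Hence its smallest nonnegative concave majorant is identically zero, so $V_{2}^{[\ell^{1},\ell^{2}]}=g_{2,[\ell^{1},\ell^{2}]}$ on $[0,\ell^{1})$, which matches $M_{2}^{x}(D_{[\ell^{1},\ell^{2}]},D_{[r,1]})$ since from any $x<\ell^{1}$ the process reaches $\ell^{1}$ before $r$, so player~1 leads and player~2 receives the follower reward $g_{2}(\ell^{1})$. The step requiring the most care is precisely this decomposition over the two disconnected components of $A^{c}$, together with the verification that the left component is inert; once that is in place, the remaining concavity and majorant bookkeeping is identical to that already carried out in Theorem~\ref{Theorem:Necessary-and-Sufficiency-Optimality} and Proposition~\ref{Proposition:Existence-Optimal-Strategy-Two-Points}.
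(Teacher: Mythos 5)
Your proposal is correct and follows essentially the same route as the paper's proof: reduce via Lemma~\ref{Lemma:Characterisation-1} to the obstacle problem for $f_{2}-g_{2,[\ell^{1},\ell^{2}]}$ on the complement of $[\ell^{1},\ell^{2}]$, show the left piece $[0,\ell^{1}]$ is inert because the obstacle there is convex and nonpositive at both endpoints, and run the concave-majorant argument of Theorem~\ref{Theorem:Necessary-and-Sufficiency-Optimality} on $(\ell^{2},1]$. The paper phrases the left-component step via Dynkin's formula and leaves the right component as a citation of Theorem~\ref{Theorem:Necessary-and-Sufficiency-Optimality}, but the substance is identical to yours.
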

\begin{proof}
By Lemma \ref{Lemma:Characterisation-1} it is sufficient merely to consider the optimal stopping problem on the set $[0,\ell^1] \cup [\ell^2,1]$ with obstacle $f_{2} - g_{2,[\ell^1,\ell^2]}$, and we will only sketch the solution. Note that since $f_{2} \le g_{2}$ it is clearly 
suboptimal to stop in $[\ell^{1},\ell^{2}]$.
From Dynkin's formula it is also suboptimal to stop on $[0,\ell^{1}]$, since $f_{2} - g_{2,[\ell^{1},\ell^{2}]}$ is convex there and $f_{2}(x) - g_{2,[\ell^{1},\ell^{2}]}(x)\le 0$ for $x \in \{0,\ell^{1}\}$. 
The solution is nontrivial only on $(\ell^{2},1]$, where the arguments used for Theorem~\ref{Theorem:Necessary-and-Sufficiency-Optimality} are sufficient to complete the proof.
\end{proof}





\end{document}